\newcommand{\be}{\begin{equation}}
\newcommand{\ee}{\end{equation}}
\newcommand{\bea}{\begin{eqnarray}}
\newcommand{\beas}{\begin{eqnarray*}}
\newcommand{\no}{\nonumber}
\newcommand{\eea}{\end{eqnarray}}
\newcommand{\eeas}{\end{eqnarray*}}
\newcommand{\pr}[1]{\mathsf{P}\left( #1 \right)}
\newcommand{\selfnote}[1]{\textbf{
    \\ ***Note to Self:} {\textsf{#1}} \textbf{***} \\}
\newcommand{\EXP}[1]{\mathsf{E}\!\left(#1\right) }
\newcommand{\remove}[1]{}
\newtheorem{thm}{Theorem}[section]
\newtheorem{cor}[thm]{Corollary}
\newtheorem{lem}[thm]{Lemma}
\newtheorem{prop}[thm]{Proposition}
\newtheorem{remark}[thm]{\bf Remark}
\newtheorem{defn}[thm]{\bf Definition}
\def\sec{\setcounter{equation}{0}}
\newcounter{cnt1}
\newcounter{cnt3}
\newcommand{\blr}{\begin{list}{$($\roman{cnt1}$)$}
 {\usecounter{cnt1} \setlength{\topsep}{0pt}
 \setlength{\itemsep}{0pt}}}
\newcommand{\bla}{\begin{list}{$($\betaph{cnt2}$)$}
 {\usecounter{cnt2} \setlength{\topsep}{0pt}
 \setlength{\itemsep}{0pt}}}
\newcommand{\bln}{\begin{list}{$($\arabic{cnt3}$)$}
 {\usecounter{cnt3} \setlength{\topsep}{0pt}
 \setlength{\itemsep}{0pt}}}
\newcommand{\el}{\end{list}}
\def\rar{\rightarrow}
\def\la{\langle}
\def\ra{\rangle}
\newcommand{\ep}{\epsilon}
\newcommand{\al}{\alpha}
\newcommand{\lam}{\lambda}
\newcommand{\sg}{\sigma}
\def\mR{\mathbb{R}}
\def\mZ{\mathbb{Z}}
\def\mN{\mathbb{N}}
\def\mE{\mathbb{E}}
\def\mL{\mathbb{L}}
\def\cC{{\mathcal C}}
\def\uG{{\underline G}}
\def\Pdl{{\mathcal P}_{n}^{(1)}}
\def\Pdlx{{\mathcal P}_{n}^{(1,x)}}
\def\Pdly{{\mathcal P}_{n}^{(1,y)}}
\def\Pdm{{\mathcal P}_{n}^{(2)}}
\def\Pdc{{\mathcal P}_{cn}^{(2)}}
\def\Pn{{\mathcal P}_{n}}
\def\P{\mathcal P}
\def\tG{\tilde{G}}
\def\1{\mathbf{1}}
\def\half{\frac{1}{2}}
\begin{document}

\begin{titlepage}
\begin{center}
{\bf Percolation and Connectivity in AB Random Geometric Graphs} \\
\vspace{0.2in} {Srikanth K. Iyer \footnote{corresponding author: skiyer@math.iisc.ernet.in}$^,$\footnote{Research Supported in part by UGC SAP -IV and DRDO grant No. DRDO/PAM/SKI/593}}\\
Department of Mathematics,
Indian Institute of Science, Bangalore, India. \\
\vspace{0.1in}
D. Yogeshwaran \footnote{Supported in part by a grant from EADS, France.}\\
INRIA/ENS TREC, Ecole Normale Superieure, Paris, France.
\end{center}
\vspace{0.1in}
%
\sloppy
\begin{center} {\bf Abstract} \end{center}


{Given two independent Poisson point processes $\Phi^{(1)},\Phi^{(2)}$  in $\mR^d$, the continuum $AB$ percolation 
model is the graph with points of $\Phi^{(1)}$ as vertices and with edges between any pair of points for which the intersection of balls of radius $2r$ centred at these points contains at least one point of $\Phi^{(2)}$. This is a generalization of the $AB$ percolation model on discrete lattices. We show the existence of percolation for all $d \geq 2$ and derive bounds for a critical intensity. We also provide a characterization for this critical intensity when $d = 2$. To study the connectivity problem, we consider independent Poisson point processes of intensities $n$ and $cn$ in the unit cube. The $AB$ random geometric graph is defined as above but with balls of radius $r$. 
We derive a weak law result for the largest nearest neighbour distance and almost sure asymptotic bounds for the connectivity threshold. 
} \\
%
\begin{center}
\today
\end{center}

\vspace{0.1in}
{\sl AMS 1991 subject classifications}: \\
\hspace*{0.5in} Primary:   60D05, 60G70;
\hspace*{0.5in} Secondary:  05C05, 90C27 \\
{\sl Keywords:} Random geometric graph, percolation, connectivity, wireless networks, secure communication.

\end{titlepage}

\section{Introduction}
\label{sec:intro}
\sec
The Bernoulli (site) percolation model on a graph $G := (V,E)$ is defined as follows : Each vertex $v \in V$ of the graph is retained with a probability $p$ or removed, with probability $1-p$, along with all the edges incident to that vertex, independently of other vertices. The model is said to percolate if the random sub-graph resulting from the deletion procedure contains an infinite connected component. The classical percolation model is the Bernoulli bond percolation model with the difference being that the deletion procedure is applied to the edges instead of the vertices. \cite{Grimmett99} is an excellent source for the rich theory on this classical percolation model. A variant of the Bernoulli site percolation model that has been of interest is the $AB$ percolation model. This model was first studied in \cite{Halley80,Halley83,Sevsek83}. The model is as follows : Given a graph $G$, each vertex is marked independently of other vertices either $A$ or $B$. Edges between vertices with similar marks ($A$ or $B$) are removed. The resulting random sub-graph is the $AB$ graph model. If the $AB$ graph contains an infinite connected component with positive probability, we say that the model percolates. An infinite connected component in the $AB$ graph is equivalent to an infinite path of vertices in $G$ with marks alternating between $A$ and $B$. This model has been studied on lattices and some related graphs. The $AB$ percolation model behaves quite differently as compared to the Bernoulli percolation model. For example, it is known that $AB$ percolation does not occur in $\mZ^2$ (\cite{Appel87}), but occurs on the planar triangular lattice (\cite{Wierman87}), some periodic two-dimensional graphs (\cite{Scheinerman87}) and the half close-packed graph of $\mZ^2$ (\cite{Wu04}). It is also known that the AB bond percolation does not occur in $\mZ^2$ for $p = \half$ (\cite{Wu04}). See \cite{Wu04,Grimmett99} for further references.

The following generalization of the discrete $AB$ percolation model has been studied on various graphs by Kesten {\it et. al.} (see \cite{Benjamini95,Kesten98,Kesten01}). Mark each vertex or site of a graph $G$ independently as either $0$ or $1$ with probability $p$ and $1-p$ respectively. Given any infinite sequence (referred to as a word) $w \in \{0,1\}^{\infty}$, the question is whether $w$ occurs in the graph $G$ or not. The sentences $(1,0,1,0...),(0,1,0,1..)$ correspond to $AB$ percolation and the sequence $(1,1,1...)$ corresponds to Bernoulli percolation. More generally Kesten {\it et. al.} answer the question whether all (or almost all) infinite sequences (words) occur or not. The graphs for which the answer is known in affirmative are 
$\mZ^d$ for $d$ large, triangular lattice and $\mZ_{cp}^2$, the close-packed graph of $\mZ^2$. Our results provide partial answers to these questions in the continuum.

Our aim is to study a generalization of the discrete $AB$ percolation model to the continuum. We study the problem of percolation and connectivity in such models. For the percolation problem the vertex set of the graph will be a homogenous Poisson point process in $\mR^d$. For the connectivity problem we  will consider a sequence of graphs whose vertex sets will be homogenous Poisson point processes of intensity $n$ in $[0,1]^d$. We consider different models while studying percolation and connectivity so as to be consistent with the literature. This allows for easy comparison with, as well as the use of, existing results from the literature. We will refer to our graphs, in the percolation context as the continuum $AB$ percolation model, and as the $AB$ random geometric graph while investigating the connectivity problem. The continuum percolation model and random geometric graphs where the nodes are of the same 
type are the topics of monographs \cite{Meester96} and \cite{Penrose03} respectively.

Our motivation for the study of $AB$ random geometric graphs comes from applications to wireless communication. In models of ad-hoc wireless networks, the nodes are assumed to be communicating entities that are distributed randomly in space. Edges between any two nodes in the graph represents the ability of the two nodes to communicate effectively with each other. In one of the widely used models, a pair of nodes share an edge if the distance between the nodes is less than a certain cutoff radius $r > 0$ that is determined by the transmission power. Percolation and connectivity thresholds for such a model have been used to derive, for example, the capacity of wireless networks (\cite{Franc07,Gupta00}). Consider a transmission scheme called the frequency division half duplex, where each node transmits at a frequency $f_1$ and receives at frequency $f_2$ or vice-versa (\cite{Tse05}). Thus nodes with transmission-reception frequency pair $(f_1,f_2)$ can communicate only with nodes that have transmission-reception frequency pair $(f_2,f_1)$ that are located within the cutoff distance $r.$ Another example where such a model would be applicable is in communication between communicating units deployed at two different levels, for example surface (or underwater) and in air. Units in a level can communicate only with those at the other level that are within a certain range. A third example is in secure communication in wireless sensor networks with two types of nodes, tagged and normal. Upon deployment, each tagged node broadcasts a key over a predetermined secure channel, which is received by all normal
nodes that are within transmission range. Two normal nodes can then communicate provided there is a tagged node from which both these normal nodes have received a key, that is, the tagged node is within transmission range of both 
the normal nodes.

The rest of the paper is organized as follows. Sections \ref{sec:perc_model} and \ref{sec:rgg_model} provide definitions and statements of our main theorems on percolation and connectivity respectively. Sections \ref{sec:perc_proofs} and \ref{sec:rgg_proofs} contain the proofs of these results.

\section{Percolation in the Continuum $AB$ Percolation Model}
\label{sec:perc_model}
\sec

\subsection{Model Definition}
\label{sec:model_defn_bool}

Let $\Phi^{(1)} = \{X_i\}_{i \geq 1}$ and $\Phi^{(2)} = \{Y_i\}_{i \geq 1}$ be independent Poisson point processes 
in $\mR^d,$ $d \geq 2$, with intensities $\lam$ and $\mu$ respectively. Let
the Lebesuge measure and the Euclidean metric on $\mR^d$ be denoted by $\| \cdot \|$  and $| \cdot |$ respectively. Let $B_x(r)$ denote the closed ball of radius $r$ centred at $x \in \mR^d$.

By percolation in a graph, we mean the existence of an infinite connected component in the graph. The standard continuum percolation model (introduced in \cite{Gilbert61}), also called the continuum percolation model or Gilbert disk graph, is defined as follows.
\begin{defn} 
\label{def:basic_bool}
Define the continuum percolation model, $\tG(\lam,r) := (\Phi^{(1)}, \tilde{E}(\lam,r))$ to be the graph with
vertex set $\Phi^{(1)}$ and edge set
\[ \tilde{E}(\lam,r) = \{ \la X_i,X_j \ra : X_i,X_j \in \Phi^{(1)}, |X_i-X_j| \leq 2r\}. \]
For fixed $r>0$, define the critical intensity of the continuum percolation model as follows :
\begin{equation} 
\lam_c(r) := \sup \left\{ \lam > 0: \pr{\tG(\lam,r) \mbox{ percolates } } = 0 \right\}.
\label{eqn:crit_lam}
\end{equation} 
\end{defn}
The edges in all the graphs that we consider are undirected, that is, $\la X_i, X_j \ra \equiv \la X_j, X_i \ra.$ We will use the notation $X_i \sim X_j$ to denote existence of an edge between $X_i$ and $X_j$ when the underlying 
graph is unambiguous. For the continuum percolation model defined above (\cite{Meester96}) it is known that $0 < \lam_c(r) < \infty$. Topologically, percolation in the above model is equivalent to existence of an unbounded connected subset in $\cup_{X \in \Phi^{(1)}}B_X(r).$ Also, by zero-one law, one can deduce that the probability of percolation is either zero or one. 

A natural analogue of this model to the $AB$ set-up would be to consider a graph with vertex set $\Phi^{(1)}$ where each vertex is independently marked either $A$ or $B$. We will consider a more general model from which results for the above model will follow as a corollary.
\begin{defn}
\label{defn:AB_Bool}
The continuum $AB$ percolation model $G(\lam,\mu,r) := (\Phi^{(1)},E(\lam,\mu,r))$ 
is the graph with vertex set $\Phi^{(1)}$ and edge set
$$E(\lam,\mu,r) := \{ \la X_i, X_j \ra : X_i,X_j \in \Phi^{(1)}, |X_i-Y| \leq 2r, |X_j-Y| \leq 2r, \mbox{ for some } 
Y \in \Phi^{(2)} \}. $$
Let $\theta(\lam,\mu,r) = \pr{G(\lam, \mu, r) \; \mbox{percolates}}.$  For a fixed $\lam, r > 0$, define the 
critical intensity $\mu_c(\lam, r)$ by
\begin{equation}
\label{eqn:critical_mu} 
\mu_c(\lam,r) := \sup\{\mu > 0: \theta(\lam,\mu,r) = 0 \}.
\end{equation}
\end{defn}
It follows from zero-one law that $\theta(\lam,\mu,r) \in \{0,1\}$. We are interested in characterizing the 
region formed by $(\lam,\mu,r)$ for which  $\theta(\lam,\mu,r) = 1$. 
\subsection{Main Results}
\label{sec:main_results_bool}

We start with some simple lower bounds for the critical intensity $\mu_c(\lam,r)$. 
\remove{We first make two simple observations.
For any $\lam$, coupling $C(\lam,\mu,r)$ as a subgraph of $C(\lam+\mu,r)$, it is easy to see that $\mu_c(\lam,r) + \lam \geq \lam_c(r)$. Thus for  $\lam < \lam_c(r)$, we have the lower bound $\mu_c(\lam,r) \geq \lam_c(r) -  \lam$.
On the other hand $<X_i,X_j> \; \in E(\lam,\mu,r)$ implies that $|X_i-X_j| \leq 4r$. Hence, $C(\lam,\mu,r)$ has an infinite component only if $\cC(\lam,2r)$ has an infinite component. Thus $\mu_c(\lam,r) = \infty$ if $\lam \leq \lam_c(2r)$.} 
\begin{prop} Fix $\lam,r > 0$. Let $\lam_c(r)$, $\mu_c(\lam,r)$ be the critical
intensities as in (\ref{eqn:crit_lam}) and 
(\ref{eqn:critical_mu}), respectively. Then
\begin{enumerate}
\item $\mu_c(\lam,r) \geq \lam_c(r) -  \lam$, $\;\;$ if $\;\; \lam_c(2r) < \lam < \lam_c(r)$, $\;\;$ and
\item $\mu_c(\lam,r) = \infty$,  $\;\;$ if  $\;\; \lam < \lam_c(2r)$.
\end{enumerate}
\label{prop:simple_bd}
\end{prop}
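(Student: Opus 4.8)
The plan is to sandwich the $AB$ graph $G(\lam,\mu,r)$ between two ordinary continuum percolation (Gilbert disk) graphs and then invoke the finiteness/positivity of $\lam_c(\cdot)$ recorded after Definition~\ref{def:basic_bool}. The two parts of the statement correspond to the two directions of comparison, and both comparisons are immediate consequences of the triangle inequality, together (for part~(1)) with a superposition of the two Poisson processes.

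For part~(2) I would first note that any edge of $G(\lam,\mu,r)$ forces its endpoints to be close: if $\la X_i,X_j\ra \in E(\lam,\mu,r)$, then there is a witnessing $Y\in\Phi^{(2)}$ with $|X_i-Y|\le 2r$ and $|X_j-Y|\le 2r$, whence $|X_i-X_j|\le 4r$ by the triangle inequality. Hence $E(\lam,\mu,r)\subseteq \tilde E(\lam,2r)$ on the common vertex set $\Phi^{(1)}$, so $G(\lam,\mu,r)$ is a spanning subgraph of the Gilbert disk graph $\tG(\lam,2r)$. An infinite component of $G(\lam,\mu,r)$ therefore sits inside an infinite component of $\tG(\lam,2r)$, so percolation of the former implies percolation of the latter. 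Since $\lam<\lam_c(2r)$ gives $\pr{\tG(\lam,2r)\text{ percolates}}=0$, we conclude $\theta(\lam,\mu,r)=0$ for every $\mu>0$, i.e. $\mu_c(\lam,r)=\infty$.

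For part~(1) I would use the superposition $\Phi:=\Phi^{(1)}\cup\Phi^{(2)}$, which is a Poisson point process of intensity $\lam+\mu$, and compare $G(\lam,\mu,r)$ with the Gilbert disk graph $\tG(\lam+\mu,r)$ built on $\Phi$. The key point is that each edge $\la X_i,X_j\ra$ of $G(\lam,\mu,r)$ is realized in $\tG(\lam+\mu,r)$ by the two-step path $X_i\sim Y\sim X_j$ through its witnessing point $Y$, because $|X_i-Y|\le 2r$ and $|Y-X_j|\le 2r$ are exactly the adjacency condition at radius $r$. Inserting these intermediate $Y$'s turns an infinite component of $G(\lam,\mu,r)$ into an infinite connected subset of $\tG(\lam+\mu,r)$, so percolation of $G(\lam,\mu,r)$ implies percolation of $\tG(\lam+\mu,r)$. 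By the definition of $\lam_c(r)$, the latter has probability zero whenever $\lam+\mu<\lam_c(r)$, so $\theta(\lam,\mu,r)=0$ for all such $\mu$; taking the supremum over $\mu$ with $\lam+\mu<\lam_c(r)$ yields $\mu_c(\lam,r)\ge \lam_c(r)-\lam$. The hypothesis $\lam<\lam_c(r)$ makes this bound nonvacuous (positive right-hand side), while $\lam>\lam_c(2r)$ merely places us outside the regime of part~(2), where $\mu_c$ is already infinite.

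The comparisons are routine; the only step requiring a little care is the passage from ``every edge embeds into a short edge (resp. a two-step path)'' to ``percolation is inherited.'' I would make this precise by observing that the vertex map is the identity on $\Phi^{(1)}$, so any infinite, $G$-connected family of $\Phi^{(1)}$-vertices remains infinite and connected in the comparison graph (through the inserted $Y$'s in part~(1)); no monotonicity argument beyond the pointwise edge inclusion, respectively path insertion, is needed. I expect this bookkeeping, rather than any analytic estimate, to be the main—and still minor—obstacle.
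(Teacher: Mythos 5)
Your proof is correct and follows essentially the same route as the paper's: part (2) via the triangle inequality embedding $G(\lam,\mu,r)$ into $\tG(\lam,2r)$, and part (1) via the superposition $\Phi^{(1)}\cup\Phi^{(2)}$ and the two-step path through the witnessing $Y$, yielding percolation of $\tG(\lam+\mu,r)$. The only implicit ingredient in both arguments is the standard monotonicity of the percolation probability in the intensity, which justifies passing from the definition of $\lam_c$ as a supremum to the statement that subcritical intensities do not percolate.
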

The second part of the above proposition holds true for $\lam = \lam_c(2r)$ provided that $\tG(\lam_c(2r),2r)$ does not percolate. This has been proven for $d = 2$ (\cite[Theorem 4.5]{Meester96}) and for all but at most finitely many $d$ (\cite{Tanemura96}). The next question is whether $\mu_c(\lam,r) < \infty$ if $\lam > \lam_c(2r)$. We answer this in affirmative for $d = 2$. 
\begin{thm}
\label{thm:perc_threshold}
Let $d=2$ and $r > 0$ be fixed. Then for any $\lam > \lam_c(2r)$, we have $\mu_c(\lam,r) < \infty$.
\end{thm}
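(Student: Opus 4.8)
The plan is to deduce $AB$ percolation from ordinary continuum percolation at a slightly reduced radius, exploiting the resulting geometric slack to guarantee genuinely overlapping balls, and then to convert the high-$\mu$ occupation of the overlap (lens) regions into percolation via a renormalization argument. First I would use the room in the hypothesis. By the scaling relation $\lam_c(s)=\lam_c(1)\,s^{-d}$, the hypothesis $\lam>\lam_c(2r)$ lets me pick $\rho<2r$ with $\lam>\lam_c(\rho)$, so $\tG(\lam,\rho)$ is supercritical and carries a.s.\ a unique infinite cluster. Choosing $\rho$ strictly below $2r$ is essential for geometric reasons: any $\tG(\lam,\rho)$-edge joins two points $X_i,X_j$ at distance at most $2\rho<4r$, so the lens $B_{X_i}(2r)\cap B_{X_j}(2r)$ has area at least a constant $a=a(\rho,r)>0$. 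Hence the probability that $\Phi^{(2)}$ places no point in this lens is at most $e^{-\mu a}\to 0$ as $\mu\to\infty$, and when a point is present the edge $\la X_i,X_j\ra$ is realized in $G(\lam,\mu,r)$. Thus $G(\lam,\mu,r)$ contains the thinned graph obtained from $\tG(\lam,\rho)$ by keeping only edges with an occupied lens, and it suffices to show this thinned graph percolates for $\mu$ large.

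Then I would run a block (renormalization) argument over $\mZ^2$. Tile $\mR^2$ by squares of side $L$ indexed by $\mZ^2$ and, following the standard supercritical crossing estimates for the Gilbert/Boolean model (Meester--Roy), declare a block \emph{good} if the cluster of $\tG(\lam,\rho)$ crosses a suitably enlarged block in both coordinate directions with these crossings linking to those of the neighbouring blocks, \emph{and} additionally every edge along a fixed crossing path, together with the edges in the overlap regions used to join neighbouring blocks, has its lens occupied by $\Phi^{(2)}$. Supercriticality makes the purely geometric crossing event have probability exceeding $1-\ep$ once $L$ is large. Fixing such an $L$, the region determining one block event is bounded, hence contains a.s.\ finitely many $\Phi^{(1)}$-points and finitely many relevant lenses; as $\mu\to\infty$ the conditional probability that all of them are occupied tends to $1$, so by dominated convergence the probability of a good block can be pushed arbitrarily close to the crossing probability, and therefore arbitrarily close to $1$.

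The good-block indicators depend only on $\Phi^{(1)},\Phi^{(2)}$ in a bounded neighbourhood of each block, so the field of good blocks is finite-range dependent. By the Liggett--Schonmann--Stacey domination theorem, once the marginal good-block probability is close enough to $1$ this field stochastically dominates a supercritical Bernoulli percolation on $\mZ^2$, which percolates. An infinite cluster of good blocks then yields an infinite component of the thinned graph: within each good block the retained-lens crossing path is a genuine $AB$-path, and the overlap construction forces the $AB$-paths of neighbouring good blocks to meet. This produces an infinite $AB$-cluster, so $\theta(\lam,\mu,r)=1$ and hence $\mu_c(\lam,r)<\infty$.

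The main obstacle is engineering the good-block event so that it simultaneously (i) has probability close to $1$, (ii) remains finite-range dependent, and (iii) forces adjacent good blocks to link into a \emph{single} $AB$-path. The tension is that an $AB$-connection must commit to specific edges, whereas a continuum crossing cluster is ``thick''; one must therefore pin down a concrete crossing path and its lenses and arrange the block overlaps so that neighbouring paths share $\Phi^{(1)}$-points (or are joined by a short occupied-lens path in the overlap). Controlling the random, a priori unbounded number of lenses per block---handled above by fixing $L$ first and only afterwards sending $\mu\to\infty$---is the key quantitative point that makes the passage from high lens-occupation to genuine percolation rigorous.
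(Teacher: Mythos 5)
Your proposal is correct and follows essentially the same route as the paper: shrink the interaction radius to some $2r_1<2r$ using continuity (you use scaling) of $\lam_c$, couple the supercritical crossings of $\tG(\lam,2r_1)$ with the event that all relevant lenses of $2r$-balls contain a $\Phi^{(2)}$-point (the lens area being bounded below is exactly the paper's $b(r,r_1)$), and renormalize onto a finite-range-dependent percolation model on a rescaled $\mZ^2$, fixing the block scale first and only then sending $\mu\to\infty$. The only notable differences are cosmetic: the paper sidesteps your ``fixed crossing path'' selection issue by requiring \emph{every} lens in the block to be occupied, and it concludes with a direct Peierls argument for $1$-dependent variables rather than Liggett--Schonmann--Stacey domination.
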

Thus the continuum $AB$ percolation model exhibits a {\it phase transition} in the plane. However, the above theorem does not tell us how to choose a $\mu$ for a given $\lam > \lam_c(2r)$ for $d=2$ such that $AB$ percolation happens, or if indeed there is a phase transition for $d \geq 3$. We obtain an upper bound for $\mu_c(\lam,r)$ as a special case of a more general result which is the continuum analog of word percolation on discrete lattices described in Section~\ref{sec:intro}. In order to state this result, we need some notation.
\begin{defn}
\label{def:p_c_a}
For each $d \geq 2$, define the critical probabilities $p_c(d)$, and the functions $a(d,r)$ as follows.
\begin{enumerate}
\item For $d =2$, consider the triangular lattice $\mathbb{T}$ (see Figure~\ref{fig:flower}) with edge length $r/2$. Let $p_c(2)$ be the critical probability for the Bernoulli site percolation on this lattice.  Around each vertex 
place a ``flower'' formed by the arcs (see Figure~\ref{fig:flower}) of the $6$ circles, each of radius 
$\frac{r}{2}$ and centred at the mid-points of the $6$ edges adjacent to the vertex. Let $a(2,r)$ be the area of 
a flower. 
\item For $d \geq 3$, let $p_c(d)$ be the critical probability for the Bernoulli site percolation on $\mZ^{*d} := (\mZ^d, \mE^{*d} := \{ <z,z_1> : |z-z_1|_{\infty} = 1\})$, where $|.|_{\infty}$ stands
for the $l_{\infty}-$norm. Define $a(d,r) = (r/2\sqrt{d})^{d}.$ 
\end{enumerate}
\end{defn}
\begin{figure}[h!t]
  \centerline{\includegraphics[scale=0.4]{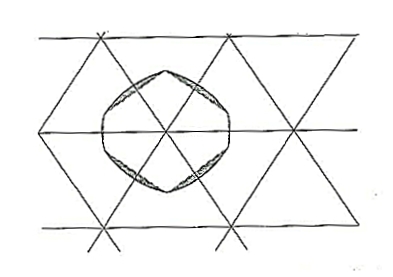}}
  \caption{The triangular lattice and flower in $\mR^2$ with area $a(2,r)$. The figure is reproduced from \cite[Fig 3.2]{Meester96}.}
  \label{fig:flower}
\end{figure}
It is known that $p_c(2) = \frac{1}{2}$, and $p_c(d) < 1,$ for $d \geq 3$ (see \cite{Grimmett99}). 
\begin{defn}
\label{defn:word}
For $i=1,\ldots,k,$ let $\Phi^{(i)}$ be independent Poisson point processes of intensities $\lam_i > 0$. Fix $(r_1,\ldots,r_k) \in \mR^k_+.$ A word $\omega := \{w_i\}_{i \geq 1} \in \{1,2,\ldots,k\}^{\mN}$ is said to occur if there exists a sequence of distinct elements $\{X_i\}_{i \geq 1} \subset \mR^d$, such that $X_i \in \Phi^{(w_i)}$, and $|X_i-X_{i+1}| \leq r_{w_i}+r_{w_{i+1}}$, for $i \geq 1$. 
\end{defn}
\begin{prop}
\label{prop:upper_bd}
For any $d \geq 2$, let $p_c(d)$, $a(d,r)$ be as in Definition~\ref{def:p_c_a}. Fix $k \in \mN$ and let $(r_1,\ldots,r_k) \in \mR^k_+$. Also for $i=1,\ldots,k,$
let $\Phi^{(i)}$ be independent Poisson point processes of intensities $\lam_i > 0$. Set $r_0 = \inf_{1 \leq i,j \leq k}\{r_i+r_j\}.$ 
If $\prod_{i=1}^k(1-e^{-\lam_ia(d,r_0)}) > p_c(d)$, then almost surely, every word occurs. 
\end{prop}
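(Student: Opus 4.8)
The plan is to discretise $\mR^d$ so that the occurrence of words is implied by ordinary Bernoulli site percolation on the lattice attached to $p_c(d)$ in Definition~\ref{def:p_c_a}. The guiding observation is that $r_0$, being the smallest possible sum $r_i+r_j$, is a ``universally safe'' distance: if two points lie within $r_0$ of each other, they are joined in the word graph no matter which types they carry. For the geometry when $d=2$, I take the triangular lattice $\mathbb{T}$ of edge length $r_0/2$ together with the flowers $\{F_v\}$ of Definition~\ref{def:p_c_a} (applied with $r=r_0$). I need three properties of this construction, all contained in the disk-graph argument of \cite{Meester96}: the flowers are pairwise disjoint; each has area $a(2,r_0)$; and if $v,w$ are adjacent vertices of $\mathbb{T}$ then $|x-y|\le r_0$ for every $x\in F_v$, $y\in F_w$ (this last point is exactly the statement that disks of radius $r_0/2$ centred in adjacent flowers overlap). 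For $d\ge 3$ I replace the flowers by the cubes $C_v = s\,v + [0,s)^d$, $v\in\mZ^d$, of side $s=r_0/(2\sqrt d)$. These are pairwise disjoint, each has volume $s^d=a(d,r_0)$, and a direct computation shows that if $v,w$ are $l_\infty$-adjacent (differ by at most $1$ in each coordinate) then $\sup\{|x-y|:x\in C_v,\,y\in C_w\}=2s\sqrt d = r_0$, the extremum being attained at opposite corners of the $2s$-box spanned by two diagonal neighbours.

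\emph{Reduction to site percolation.} Call a vertex (resp. cell) \emph{open} if its flower $F_v$ (resp. cube $C_v$) contains at least one point of each process $\Phi^{(1)},\dots,\Phi^{(k)}$. The number of points of $\Phi^{(i)}$ in the cell is Poisson with mean $\lam_i\,a(d,r_0)$, and the $k$ processes are independent, so $\pr{v \text{ open}}=\prod_{i=1}^k(1-e^{-\lam_i a(d,r_0)})=:p$. Because distinct cells are disjoint, the open events are independent across $v$; thus the configuration is genuine Bernoulli site percolation with parameter $p$ on $\mathbb{T}$ (resp. $\mZ^{*d}$). The hypothesis $p>p_c(d)$ then produces an infinite open cluster almost surely: existence of an infinite cluster is a translation-invariant event, hence has probability $0$ or $1$, and it is positive for $p>p_c(d)$. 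From this cluster I extract an infinite self-avoiding open path $v_1,v_2,\dots$ with pairwise distinct vertices.

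\emph{Realising every word.} Fix any word $\om=\{w_i\}_{i\ge1}$ and choose $X_i\in\Phi^{(w_i)}\cap(\text{cell of }v_i)$, which is non-empty because $v_i$ is open. The $X_i$ are distinct since the cells are disjoint, and for consecutive indices the adjacency $v_i\sim v_{i+1}$ gives $|X_i-X_{i+1}|\le r_0\le r_{w_i}+r_{w_{i+1}}$ by the definition of $r_0$; hence $\om$ occurs. The \emph{same} infinite open path serves for every word simultaneously (one simply selects points of the prescribed types from the same open cells), so the almost sure event ``an infinite open path exists'' is contained in the event that every word occurs, which therefore has probability one.

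The main obstacle is the planar geometry: verifying that adjacent flowers lie within distance $r_0$ while remaining disjoint and of area exactly $a(2,r_0)$. This is precisely the flower estimate of \cite{Meester96}, which I would invoke rather than reprove; the case $d\ge3$ reduces to the elementary corner computation above, and the remaining independence and zero--one-law arguments are routine.
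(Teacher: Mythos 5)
Your proposal is correct and follows essentially the same route as the paper: discretise to Bernoulli site percolation on the triangular lattice with flowers ($d=2$) or on the scaled close-packed lattice with disjoint cubes of side $r_0/(2\sqrt d)$ ($d\ge 3$), declare a cell open when it meets all $k$ processes, invoke supercriticality under the stated hypothesis, and read off every word along an infinite open path using $|X_i-X_{i+1}|\le r_0\le r_{w_i}+r_{w_{i+1}}$. The only (welcome) additions are your explicit diameter computation for $l_\infty$-adjacent cubes and the remark that a single self-avoiding open path realises all words simultaneously, both of which the paper leaves implicit.
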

The following corollary, the proof of which is given in Section \ref{sec:perc_proofs}, gives an upper bound for $\mu_c(\lam,r)$ for large $\lam$.
\begin{cor}
Suppose that $d \geq 2$, $r > 0$, and $\lam > 0$ satisfies 
\[  \lam > - \; \frac{\log \left(1 - p_c(d) \right)}{a(d,2r)}, \]
where $p_c(d)$, $a(d,r)$ are as in Definition~\ref{def:p_c_a}. Let $\mu_c(\lam,r)$ be the critical
intensity as in (\ref{eqn:critical_mu}). Then
\begin{equation}
\label{eqn:mu_c_ub}
\mu_c(\lam,r) \leq - \frac{1}{a(d,2r)} \log \left[ 1 - \left( \frac{p_c(d)}{1 - e^{-\lam a(d,2r)}} \right) \right].
\end{equation}

\remove{Let $G(\lam,\mu,r)$ be the $AB$ percolation model as in
Definition~\ref{defn:AB_Bool}, and let $\mu_c(\lam,r)$ be the critical
intensity as in (\ref{eqn:critical_mu}). For any $\lam$ satisfying $a(d,2r)\lam > -\log \left(1 - p_c(d) \right) $, we have that $\mu_c(\lam,r) \leq \inf \{\mu > 0:(1-e^{-\lam a(d,2r))})(1-e^{-\mu a(d,2r)}) > p_c(d)\}.$}
\label{cor:upper_bound}
\end{cor}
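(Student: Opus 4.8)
The plan is to derive the corollary purely as a specialization of the word-percolation result, Proposition~\ref{prop:upper_bd}, applied to the two processes of Definition~\ref{defn:AB_Bool}. First I would set $k=2$, take $\Phi^{(1)},\Phi^{(2)}$ to be the intensity-$\lam$ and intensity-$\mu$ processes, and choose radii $r_1=r_2=r$. With this choice $r_0=\inf_{1\le i,j\le 2}\{r_i+r_j\}=2r$, so the hypothesis of Proposition~\ref{prop:upper_bd} reads
\[ \left(1-e^{-\lam a(d,2r)}\right)\left(1-e^{-\mu a(d,2r)}\right) > p_c(d). \]
Whenever $\mu$ makes this inequality hold, the proposition guarantees that almost surely every word occurs; in particular the alternating word $\om=(1,2,1,2,\ldots)$ occurs.

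The key observation is that occurrence of this alternating word is exactly an infinite $AB$ path. An occurrence produces distinct points $X_1\in\Phi^{(1)}, Y_1\in\Phi^{(2)}, X_2\in\Phi^{(1)}, Y_2\in\Phi^{(2)},\ldots$ with $|X_j-Y_j|\le r_1+r_2=2r$ and $|Y_j-X_{j+1}|\le 2r$ for every $j$. Thus each $Y_j\in\Phi^{(2)}$ lies within $2r$ of both $X_j$ and $X_{j+1}$, which is precisely the condition in Definition~\ref{defn:AB_Bool} for the edge $\la X_j,X_{j+1}\ra\in E(\lam,\mu,r)$. Hence $X_1,X_2,X_3,\ldots$ is an infinite self-avoiding path in $G(\lam,\mu,r)$, so the model percolates and $\theta(\lam,\mu,r)=1$.

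It remains to turn the displayed inequality into the stated bound on $\mu_c$. Rearranging, it holds if and only if $1-e^{-\mu a(d,2r)} > p_c(d)/(1-e^{-\lam a(d,2r)})$, and this has a solution $\mu>0$ precisely when the right-hand side is $<1$, i.e.\ when $1-e^{-\lam a(d,2r)} > p_c(d)$; this last inequality is equivalent to the hypothesis $\lam > -\log(1-p_c(d))/a(d,2r)$ and simultaneously guarantees that the argument of the logarithm in (\ref{eqn:mu_c_ub}) is positive. Solving for $\mu$ shows the inequality holds for every $\mu$ strictly exceeding the right-hand side of (\ref{eqn:mu_c_ub}). For all such $\mu$ we have just shown $\theta(\lam,\mu,r)=1$, so the set $\{\mu>0:\theta(\lam,\mu,r)=0\}$ contains no point above that value, and taking the supremum in (\ref{eqn:critical_mu}) yields the claimed bound.

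The only real content beyond bookkeeping is the identification in the second paragraph, so the main thing to get right is the choice of radii: one must split the connection distance $2r$ as $r_1+r_2=r+r$ rather than assigning radius $2r$ to each process, so that a single $\Phi^{(2)}$ point simultaneously certifies both half-edges of the $AB$ connection. Distinctness of the $X_j,Y_j$ is automatic, since the two Poisson processes are almost surely simple and mutually disjoint.
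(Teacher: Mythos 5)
Your proposal is correct and follows essentially the same route as the paper: apply Proposition~\ref{prop:upper_bd} with $k=2$, $\lam_1=\lam$, $\lam_2=\mu$, $r_1=r_2=r$ (so $r_0=2r$), identify occurrence of the alternating word $(1,2,1,2,\ldots)$ with an infinite $AB$ path in $G(\lam,\mu,r)$, and then solve the inequality $(1-e^{-\lam a(d,2r)})(1-e^{-\mu a(d,2r)})>p_c(d)$ for $\mu$. The extra care you take with the choice $r_1=r_2=r$ and with distinctness of the points matches the paper's intent exactly.
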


\begin{remark}
A simple calculation (see \cite{Meester96}, pg.88) gives $a(2,2) \simeq 0.8227$, and $$-(a(2,2))^{-1} \log (1 - p_c(2)) \simeq 0.843.$$ Using these we obtain from Corollary~\ref{cor:upper_bound} that $\mu_c(0.85,1) < 6.2001$.
\end{remark}
\begin{remark}
It can be shown that the number of infinite components in the continuum $AB$ percolation model is at most one, 
almost surely. The proof of this fact follows along the same lines as the proof in the continuum percolation model (see \cite[Proposition 3.3, Proposition 3.6]{Meester96}), since it relies on the ergodic theorem and the topology of infinite components but not on the specific nature of the infinite components.
\end{remark}
The proposition above can be used to show existence of $AB$ percolation in the natural analogue of the discrete 
$AB$ percolation model (refer to the two sentences above Definition~\ref{defn:AB_Bool}). Recall that $\Phi^{(1)}$ 
is a Poisson point process in $\mR^d$ of intensity $\lam > 0$. Let $\{m_i\}_{i \geq 1}$ be a sequence of i.i.d. 
marks distributed as $m \in \{A,B\}$, with $P(m=A) = p = 1 - P(m=B)$. Define the point processes $\Phi^A, \Phi^B$ as 
\[ \Phi^A := \{X_i \in \Phi^{(1)} : m_i = A \}, \qquad \Phi^B := \Phi^{(1)} \setminus \Phi^A. \] 
\begin{defn}
\label{defn:AB_Bool1}
For any $\lam, r > 0$, and $p \in (0,1)$, let $\Phi^A$ and $\Phi^B$ be as defined above. Let $\widehat{G}(\lam,p,r) := (\Phi^A, \widehat{E}(\lam,p,r))$ be the graph with vertex-set $\Phi^A$ and edge-set 
$$\widehat{E}(\lam,p,r) := \{<X_i,X_j> : X_i, X_j \in \Phi^A, \;    |X_i-Y| \leq 2r, |X_j-Y| \leq 2r, \; \mbox{ for some } Y \in \Phi^B \}.$$
\end{defn} 
\begin{cor}
\label{cor:AB_perc}
Let $\widehat{\theta}(\lam,p,r) := P(\widehat{G}(\lam,p,r) \; \mbox{percolates})$. Then for any $\lam$ satisfying 
\[ \lam > - \; \frac{2 \log \left(1-\sqrt{p_c(d)} \right)}{a(d,2r)},\] 
there exists a $p(\lam) < \frac{1}{2}$, such that $\widehat{\theta}(\lam,p,r) = 1$, for all $p \in (p(\lam),1-p(\lam))$.
\end{cor}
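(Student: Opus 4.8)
The plan is to recognise the coloured model $\widehat{G}(\lam,p,r)$ as an instance of the two-intensity model $G(\cdot,\cdot,r)$ of Definition~\ref{defn:AB_Bool} and then feed the result into Corollary~\ref{cor:upper_bound}, reducing everything to an elementary one-variable optimization in $p$.

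First I would identify the two models. By the colouring theorem for Poisson processes, marking the points of $\Phi^{(1)}$ (intensity $\lam$) independently as $A$ or $B$ with probabilities $p$ and $1-p$ produces two \emph{independent} Poisson processes $\Phi^A,\Phi^B$ of intensities $\lam p$ and $\lam(1-p)$. Comparing the edge sets in Definitions~\ref{defn:AB_Bool} and \ref{defn:AB_Bool1}, an edge of $\widehat{G}(\lam,p,r)$ joins $X_i,X_j\in\Phi^A$ exactly when some $Y\in\Phi^B$ lies within $2r$ of both, which is precisely the adjacency rule of $G(\lam p,\lam(1-p),r)$ with $\Phi^A$ in the role of the vertex process $\Phi^{(1)}$ and $\Phi^B$ in the role of $\Phi^{(2)}$. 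Hence $\widehat{\theta}(\lam,p,r)=\theta(\lam p,\lam(1-p),r)$, and it suffices to find a symmetric interval of $p$ on which the right-hand side equals $1$.

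Next, writing $a:=a(d,2r)$, I would apply Corollary~\ref{cor:upper_bound} with vertex intensity $\lam p$ and connector intensity $\lam(1-p)$: it gives $\theta(\lam p,\lam(1-p),r)=1$ as soon as $\lam(1-p)>\mu_c(\lam p,r)$, and the bound on $\mu_c$ there rearranges the inequality $\lam(1-p)>\mu_c(\lam p,r)$ into the single condition
\be
g(p):=\left(1-e^{-\lam p\,a}\right)\left(1-e^{-\lam(1-p)\,a}\right) > p_c(d).
\ee
(The inequality $g(p)>p_c(d)$ forces $1-e^{-\lam p\,a}>p_c(d)$, which is exactly the hypothesis $\lam p>-\log(1-p_c(d))/a$ needed for the corollary, so nothing is lost.) Thus the claim reduces to showing that the super-level set $\{p\in(0,1):g(p)>p_c(d)\}$ contains an interval symmetric about $1/2$ precisely when $\lam$ exceeds the stated threshold.

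Finally I would analyze $g$. It is symmetric about $p=1/2$, and differentiating $\log g$ shows that $g'(p)$ has the sign of $h(\lam p\,a)-h(\lam(1-p)\,a)$, where $h(x)=1/(e^{x}-1)$ is strictly decreasing on $(0,\infty)$; hence $g$ strictly increases on $(0,1/2)$ and strictly decreases on $(1/2,1)$, with maximum $g(1/2)=(1-e^{-\lam a/2})^{2}$. Consequently $\sup_p g(p)>p_c(d)$ if and only if $(1-e^{-\lam a/2})^{2}>p_c(d)$, which rearranges (using $0<p_c(d)<1$) to $\lam>-2\log(1-\sqrt{p_c(d)})/a(d,2r)$ — exactly the hypothesis. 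Under it, $g(1/2)>p_c(d)>0=g(0)$, so by continuity and strict monotonicity there is a unique $p(\lam)\in(0,1/2)$ with $g(p(\lam))=p_c(d)$ and $g(p)>p_c(d)$ for every $p\in(p(\lam),1-p(\lam))$; for each such $p$ the previous step gives $\widehat{\theta}(\lam,p,r)=\theta(\lam p,\lam(1-p),r)=1$. The only genuinely substantive step is the first identification, that colour-thinning yields two independent Poisson processes whose $AB$-adjacency coincides with $G(\lam p,\lam(1-p),r)$; once that is in place, the unimodality of $g$ and the arithmetic matching the threshold are routine, and the equivalence between $g(p)>p_c(d)$ and the $\mu_c$-bound is just the algebra already performed in deriving Corollary~\ref{cor:upper_bound}.
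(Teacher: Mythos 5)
Your proposal is correct and follows essentially the same route as the paper: both identify $\widehat{G}(\lam,p,r)$ with $G(\lam p,\lam(1-p),r)$ via Poisson colouring and reduce the claim to the product condition $(1-e^{-\lam p\,a(d,2r)})(1-e^{-\lam(1-p)a(d,2r)})>p_c(d)$ supplied by Proposition~\ref{prop:upper_bd}. The only difference is cosmetic: the paper produces a symmetric interval about $1/2$ by noting each factor exceeds $\sqrt{p_c(d)}$ at $p=1/2$ and invoking continuity, whereas you obtain the largest such interval by showing $g$ is unimodal with maximum $g(1/2)=(1-e^{-\lam a(d,2r)/2})^{2}$.
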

\remove{
The final result in this section asserts that when percolation happens in the $AB$ Boolean model, the giant 
component will be unique.
\begin{prop}
\label{prop:uniqueness}
For any non-negative values of the parameters $\lam, \mu , r$, there is at most one infinite component in $C(\lam,\mu,r)$ almost surely.
\end{prop}
}
\section{Connectivity in $AB$ Random Geometric Graphs}
\label{sec:rgg_model}
\sec

\subsection{Model Definition}
\label{sec:model_defn_rgg}

The set up for the study of connectivity in $AB$ random geometric graphs is as follows.  For each $n \geq 1,$ let $\Pdl$ and $\Pdm$ be independent homogenous Poisson point processes in $U = [0,1]^d$, $d \geq 2$, of intensity $n$. We also nullify some of the technical complications arising out of boundary effects by choosing to work with the toroidal metric on the unit cube, defined as
\begin{equation}
d(x,y) := \inf\{|x-y+z|: z \in \mathbb{Z}^d \}, \qquad x,y \in U.
\label{eqn:tm}
\end{equation}
%
%
\begin{defn}
\label{defn:AB_RGG}
For any $m, n \geq 1$, the $AB$ random geometric graph 
$G_n(m,r)$ is the graph with vertex set $\Pdl$ and edge set
\[ E_n(m,r):= \{ \la X_i,X_j \ra : X_i,X_j \in \Pdl, d(X_i, Y) \leq r, 
d(X_j,Y) \leq r, \mbox{ for some } Y \in \mathcal{P}^{(2)}_m \}. \]
\end{defn}
Our goal in this section is to study the {\it connectivity threshold} in the sequence of graphs $G_n(cn,r)$ 
as $n \to \infty$ for $c > 0$. The constant $c$ can be thought of as a measure of the relative denseness or 
sparseness of $\Pdl$ with respect to $\Pdc$ (see Remark~\ref{rem:rem_c} below). We will also prove
a distributional convergence result for the critical radius required to eleminate isolated nodes. To this
end we introduce the following definition.
\begin{defn}
\label{defn:iso_lnnr}
For each $n \geq 1$, let $W_n(r)$ be the number of isolated nodes, that is,
vertices with degree zero in $G_n(cn,r)$, and define 
the {\it largest nearest neighbor radius} as
$$M_n := \sup \{r \geq 0: W_n(r) > 0\}.$$ 
\end{defn}
\subsection{Main Results}
\label{sec:main_results_rgg}
Let $\theta_d := \|B_O(1)\|$ be the volume of the $d$-dimensional unit closed ball centered at the origin. 
For any $\beta > 0$, and $n \geq 1$, define the sequence of cut-off functions,
\begin{equation}
\label{ass:lt_regime}
r_n(c,\beta) = \left( \frac{\log(n/\beta)}{cn\theta_d} \right)^{\frac{1}{d}},
\end{equation}
and let
\begin{equation}
r_n(c) = r_n(c,1).
\label{eqn:r_n(c,1)}
\end{equation}
\remove{It follows from (\ref{eqn:exp_isoln}) that with the choice of $r_n$ as in (\ref{ass:lt_regime}),
\begin{equation}
\EXP{W_n(r_n)} \to \beta, \qquad \mbox{ as } n \to \infty.
\label{eqn:cgs_exp_iso_nodes}
\end{equation}
}
Let $e_1 := (1,0,\ldots,0) \in \mR^d$ be the unit vector in the first coordinate direction. For $d \geq 2$ 
and $u,s > 0$, define 
\begin{equation}
\label{eqn:defn_eta}
\eta(u,s) := \frac{\|B_O(u^{\frac{1}{d}}) \cap B_{s^{\frac{1}{d}}e_1}(u^{\frac{1}{d}}) \|}{\theta_d u}.
\end{equation}
For $s \leq 2u$, we have (see \cite[(7.5)]{Goldstein10} and \cite[(6)]{Moran73})
\begin{equation}
\label{eqn:exp_eta}
\eta(u,s) = 1 - \frac{\theta_{d-1}}{\theta_d} \int_0^{(\frac{s}{u})^{\frac{1}{d}}} \left( 1 -\frac{t^2}{4} \right)^{\frac{d-1}{d}} dt.     
\end{equation}
If $s \geq 2u$, then $\eta(u,s) = 0$. Since, the intersection $B_O(u^{\frac{1}{d}}) \cap B_{s^{\frac{1}{d}}e_1}(u^{\frac{1}{d}})$ always contains a ball of diameter $(2u^{\frac{1}{d}} - s^{\frac{1}{d}})$, 
we get the following lower bound : 
\begin{equation}
\label{eqn:bds_eta}
\eta(u,s) \geq \left(1 - \half  \left( \frac{s}{u} \right)^{\frac{1}{d}} \right)^d. 
\end{equation}
The next theorem gives asymptotic bounds for a strong connectivity threshold in $AB$ random geometric graphs. Asymptotics for the strong connectivity threshold was one of the more difficult problems in the theory of random geometric graphs. We will take $\beta = 1$ in (\ref{ass:lt_regime}) and work with the cut-off functions $r_n(c)$ as defined in 
(\ref{eqn:r_n(c,1)}). 
\remove{ 
Define the function $\eta : \mR_+^2 \to \mR$ by 
\begin{equation}
\eta(a,c) =
\begin{cases}
 \frac{1}{\pi} \left[ 2\phi\left( \half \left( \frac{c}{a} \right)^{\frac{1}{2}}
\right) - \sin \left(2\phi\left( \half \left( \frac{c}{a} \right)^{\frac{1}{2}}
\right) \right) \right] & \mbox{if } d = 2 \\
\left(1 - \half  \left( \frac{c}{a} \right)^{\frac{1}{d}}
 \right)^d & \mbox{if } d \geq 3,
\end{cases}
\label{eqn:defn_eta}
\end{equation}
where $\phi(a) = \arccos (a)$.}
Define the function $\al: \mR_+ \to \mR$ by
\begin{equation}
\al (c) := \inf \{a : a\eta(a,c) > 1 \}.
\label{eqn:defn_alpha}
\end{equation} 
From (\ref{eqn:exp_eta}), it is clear that for fixed $c > 0$, $\eta(a,c)$ is increasing in $a$ for $a > c/2$
and converges to 1 as $a \to \infty$ and hence $a(c) < \infty$. 
From the bound (\ref{eqn:bds_eta}), we get that 
$$\left(1 + \frac{c^{\frac{1}{d}}}{2} \right)^d \eta\left(\left(1 + \frac{c^{\frac{1}{d}}}{2} \right)^d,c \right) 
\geq 1,$$ 
for $d \geq 2$. Thus we have the bound $\al(c) \leq \left(1 + \frac{c^{\frac{1}{d}}}{2} \right)^d$ for $d \geq 2$. 
%
\begin{thm}
\label{thm:conn}
Let $\al(c)$ be as defined in (\ref{eqn:defn_alpha}) and $r_n(c)$ be as defined in (\ref{eqn:r_n(c,1)}). 
Define $\al^*_n(c) := \inf \{a :  G_n(cn,a^{\frac{1}{d}}r_n(c)) \, \mbox{is connected} \}.$ Then for any $c > 0$, almost surely, 
\begin{equation} 
\label{eqn:connect}
1 \leq \liminf_{n \to \infty} \al^*_n(c) \leq \limsup_{n \to \infty} \al^*_n(c) \leq \al(c).
%
%
\end{equation}
\end{thm}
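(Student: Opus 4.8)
The plan is to establish the two inequalities separately. The lower bound $\liminf_n \al^*_n(c) \geq 1$ will come from exhibiting isolated vertices at every scale just below $r_n(c)$, while the upper bound $\limsup_n \al^*_n(c) \leq \al(c)$ will come from coupling $G_n(cn,a^{\frac1d}r_n(c))$ to an auxiliary Gilbert graph on $\Pdl$ and transferring the latter's connectivity. Since connectivity is monotone in the radius, $\al^*_n(c)$ is a genuine threshold, so it suffices to control the graph at each fixed scale $a$.

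For the lower bound, fix $a<1$. A vertex $X \in \Pdl$ with no point of $\Pdc$ within distance $a^{\frac1d}r_n(c)$ is automatically isolated in $G_n(cn,a^{\frac1d}r_n(c))$, and one isolated vertex already forces disconnection. Let $W'_n$ be the number of such relay-free vertices. By independence of the two processes and the Mecke formula, $\EXP{W'_n} = n\exp(-cn\theta_d\, a\, r_n(c)^d) = n^{1-a} \to \infty$. A second-moment estimate — splitting the pair integral into pairs at distance $>2a^{\frac1d}r_n(c)$, where the two relay-avoidance events factorize, and the negligible contribution of nearby pairs — gives $\VAR{W'_n} = o(\EXP{W'_n}^2)$, whence $\pr{W'_n=0}\to 0$. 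Along a polynomially growing subsequence these Chebyshev bounds become summable, so Borel--Cantelli (with a monotone coupling of the processes in $n$) yields $W'_n>0$ eventually, almost surely; thus $\al^*_n(c)\geq a$ eventually, and intersecting over rational $a\uparrow 1$ gives $\liminf_n \al^*_n(c)\geq 1$.

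For the upper bound, fix $a>\al(c)$. Since $\eta(a,c)$ is increasing in $a$ and decreasing in its second argument by (\ref{eqn:exp_eta}), the definition (\ref{eqn:defn_alpha}) gives $a\eta(a,c)>1$, and by continuity we may fix $\delta>0$ with $a\eta(a,(1+\delta)c)>1$. Introduce the auxiliary Gilbert graph $H_n$ on $\Pdl$ in which $X\sim X'$ whenever $d(X,X')\leq \rho_n := ((1+\delta)c)^{\frac1d}r_n(c)$; then $n\theta_d\rho_n^d-\log n = \delta\log n \to\infty$, so the classical connectivity threshold for the Poisson Gilbert graph on the torus (see \cite{Penrose03}) makes $H_n$ connected for all large $n$, almost surely. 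The key step is to realize every edge of $H_n$ in $G_n$: if $d(X,X')\leq\rho_n$, then $X\sim X'$ in $G_n$ as soon as $\Pdc$ hits the lens $B_X(a^{\frac1d}r_n(c))\cap B_{X'}(a^{\frac1d}r_n(c))$, which by (\ref{eqn:defn_eta}) has volume at least $\theta_d a\, r_n(c)^d\,\eta(a,(1+\delta)c) = \frac{a\eta(a,(1+\delta)c)}{cn}\log n$, so the lens is empty with probability at most $n^{-a\eta(a,(1+\delta)c)}$. As $H_n$ has $O(n\log n)$ edges, the expected number of unrealized edges is $O(\log n)\,n^{1-a\eta(a,(1+\delta)c)}\to 0$. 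Hence $H_n$ is a subgraph of $G_n$ with high probability, so $G_n$ is connected, giving $\al^*_n(c)\leq a$ and, intersecting over rational $a\downarrow\al(c)$, $\limsup_n \al^*_n(c)\leq \al(c)$.

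The main obstacle is upgrading these convergence-in-probability statements to the almost sure conclusion. Neither the lower-bound event (existence of a relay-free vertex) nor the upper-bound event (connectivity) is monotone in $n$ once the scale $r_n(c)$ is allowed to shrink, so one cannot couple directly across all $n$. The resolution, standard but delicate, is to prove each bound along a subsequence $n_k=k^{K}$ with $K$ large enough to make the relevant Chebyshev and Markov bounds summable, and then interpolate to the full sequence using $n_{k+1}/n_k\to 1$, so that $r_{n_k}(c)/r_{n_{k+1}}(c)\to 1$ and a fixed amount of slack in the strict inequalities $a<1$ and $a>\al(c)$ absorbs the within-block variation. Verifying that this slack is absorbed uniformly — in particular that the continuity of $\eta$ and the monotone coupling of $\Pdl$ and $\Pdc$ combine correctly across each block — is the technical heart of the argument.
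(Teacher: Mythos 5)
Your proposal is correct, and on the upper bound it is essentially the paper's own argument: you couple $G_n(cn,a^{\frac{1}{d}}r_n(c))$ with a Gilbert graph $\uG_n(R_n(A_0))$ on $\Pdl$ for an $A_0>1$ chosen so that $a\eta(a,A_0c)>1$ (your $A_0=1+\delta$), realize each Gilbert edge by asking $\Pdc$ to hit the lens of the two $a^{\frac{1}{d}}r_n(c)$-balls, bound the expected number of unrealized edges by $O(n\log n)\,n^{-a\eta(a,A_0c)}\to 0$, and pass to an almost sure statement via a block argument along $n_j=j^{b}$; the paper's events $B(n,m,r,R)$ and the containment $B(n,cn,r_n,R_n)\subset B(n_{j+1},cn_j,r_{n_{j+1}},R_{n_j})$ are precisely the monotonicity you invoke to let the slack in $a>\al(c)$ absorb the within-block variation. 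On the lower bound you exploit the same geometric fact as the paper (a $\Pdl$-vertex with no $\Pdc$ relay within range is isolated, and an isolated vertex disconnects the graph once $\Pdl$ has at least two points, which holds eventually a.s.), but you reach the almost sure conclusion by a different probabilistic route: a second-moment/Chebyshev bound on the count $W'_n$ of relay-free vertices, whose failure probability decays only polynomially, like $n^{a-1}\log n$, and so forces a second subsequence-plus-interpolation step with a monotone coupling of the processes in $n$. The paper's Proposition~\ref{prop:liminf_M_n} avoids this entirely: it packs $\Theta(n/\log n)$ disjoint balls, makes the relevant events independent across the packing by additionally requiring a $\Pdl$ point in a small concentric ball, and thereby drives the failure probability down to $\exp(-\kappa^2 n^{\ep}/\log n)$, which is summable over \emph{all} $n$, so Borel--Cantelli applies directly with no coupling across $n$ and no interpolation. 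Both routes are sound; the paper's packing device is the cleaner way to get the lower bound almost surely, while your second-moment version works but shifts the technical burden onto exactly the block/coupling bookkeeping you identify as the delicate point.
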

As is obvious, the bounds are tight for $c$ small enough. We derive the lower bound by covering the space with disjoint circles and showing that at least one of them contains an isolated node. For the upper bound, we couple the AB random geometric graph with a random geometric graph and use the connectivity threshold for the random geometric graph (see Theorem \ref{thm:conn_rgg}).
\remove{ 
Let $\phi(x) = \arccos(x)$. For $d = 2$, define 
\begin{equation} 
A(c) = \pi^{-1} \left[ 2\phi\left( \frac{c^{\frac{1}{2}}}{2} \right) - 
\sin \left( 2 \phi\left( \frac{c^{\frac{1}{2}}}{2} \right) \right) \right].
\label{eqn:defn_A(c)}
\end{equation}
%
%
}

In order to derive the asymptotic distribution of the critical radius required to eleminate isolated nodes, we
need to first find conditions on the parameters $c$ and $\beta$ in (\ref{ass:lt_regime}) so that the expected 
number of isolated nodes will stabilize in the limit. This is the content of Lemma~\ref{lem:cgs_exp_isoln}.

Set $\eta(s) := \eta(1,s)$ and note that $\eta(u,s) = \eta(\frac{s}{u})$ by $(\ref{eqn:exp_eta}).$ Define the 
constant $c_0$ as follows :
\begin{equation} c_0 := \left\{ \begin{array}{ll} 
\sup \{c : \eta(c) + \frac{1}{c} > 1 \} &  \mbox{if } d = 2 \\
1 & \mbox{if } d \geq 3. 
\end{array} \right.
\label{eqn:defn_c_0}
\end{equation}
\remove{
Since $\phi(\frac{c^{\frac{1}{2}}}{2})$ is a decreasing function in $c$ and $cos(x) \leq 1 \, \, \forall x \in [0,2\pi]$, we have that the derivative of $A(c)$, $A'(c) = 2\frac{\phi'(c^{\frac{1}{2}}/2)}{c}\left(1 - \cos(2\phi(\frac{c^{\frac{1}{2}}}{2}))\right) \leq 0$ for $c \in [0,4].$}
From (\ref{eqn:exp_eta}), it is clear that $\eta(c) + \frac{1}{c}$ is decreasing in $c$. Hence $1 < c_0 < 4$ 
for $d = 2$ as $\eta(1) > 0$ and $\eta(4) = 0$. The first part of Lemma~\ref{lem:cgs_exp_isoln} shows that for 
$c < c_0$, the above choice of radius stabilizes the expected number of isolated nodes in $G_n(cn,r_n(c,\beta))$ 
as $n \to \infty$. The second part shows that the assumption $c < c_0$ is not merely technical. The lemma also 
suggests a {\it phase transition} at some $\tilde{c} \in  [1,2^d]$, in the sense that, the expected number of 
isolated nodes in $G_n(cn,r_n(c,\beta))$ converges to a finite limit for $c < \tilde{c}$ and diverges for 
$c > \tilde{c}$.

\begin{lem}
\label{lem:cgs_exp_isoln}
For any $\beta, c > 0$, let $r_n(c,\beta)$ be as defined in (\ref{ass:lt_regime}), and $W_n(r_n(c,\beta))$ be the number of isolated nodes in $G_n(cn, r_n(c,\beta))$. Let $c_0$ be as defined in (\ref{eqn:defn_c_0}).
Then as $n \rar \infty$, 
\begin{enumerate}
\item $\EXP{W_n(r_n(c,\beta))} \to \beta$ for $ c < c_0$, and 
\item $\EXP{W_n(r_n(c,\beta))} \to \infty$ for $c > 2^d$.  
\end{enumerate}
\end{lem}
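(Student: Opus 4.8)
The plan is to compute $\EXP{W_n(r_n(c,\beta))}$ exactly via the Mecke/Campbell formula and then analyze its limit. First I would write down the expected number of isolated nodes. A point $X \in \Pdl$ is isolated in $G_n(cn,r)$ precisely when it has no $\Pdl$-neighbour, i.e. when there is no $X_j \in \Pdl$ and no $Y \in \Pcn$ with $d(X,Y) \leq r$ and $d(X_j,Y) \leq r$. Conditioning on the configuration of $\Pcn$, the point $X$ fails to be isolated iff some other point of $\Pdl$ lands in the region $S(X) := \bigcup_{Y \in \Pcn \cap B_X(r)} B_Y(r)$, the union of $r$-balls around those type-2 points that are within $r$ of $X$. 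By the independence of $\Pdl$ and $\Pcn$ together with the Poisson property, the probability that $X$ is isolated, given $\Pcn$, is $\exp(-n\|S(X)\|)$, and hence by the Mecke formula
\begin{equation}
\label{eqn:iso_mecke}
\EXP{W_n(r)} = n \int_U \EXP{\exp\left(-n\|S(x)\|\right)} \, dx = n\, \EXP{\exp\left(-n\|S(O)\|\right)},
\end{equation}
the last equality by translation invariance of the toroidal metric. The main quantity to control is therefore the random volume $\|S(O)\|$ of the union of $r$-balls centred at the Poisson points of $\Pcn$ lying in $B_O(r)$.

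The key step is to evaluate the expectation in \eqref{eqn:iso_mecke}. Since $\Pcn$ is Poisson of intensity $cn$, the number $N$ of its points in $B_O(r)$ is Poisson with mean $cn\theta_d r^d$, and these points are i.i.d. uniform on $B_O(r)$. With $r = r_n(c,\beta)$ we have $cn\theta_d r_n^d = \log(n/\beta)$, so $N$ has mean $\log(n/\beta)$. Conditioning on $N=k$ and on the locations $Y_1,\dots,Y_k$, the exponent $n\|S(O)\|$ is governed by $\|\bigcup_{i=1}^k B_{Y_i}(r)\|$. The simplifying feature in the connectivity (large-deviation) regime is that $r_n \to 0$, and after the scaling $u = n\theta_d r^d$ the function $\eta$ from \eqref{eqn:defn_eta} is exactly the normalized volume of the intersection of two such balls; a point $Y_i$ at toroidal distance $\rho$ from $O$ contributes an $r$-ball whose overlap with $B_O(r)$ is measured by $\eta$. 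Carrying out the Poisson sum over $k$ and the uniform average over ball positions, one obtains a closed expression whose leading exponential order is $n \cdot r_n^d \cdot (\text{const})$; substituting $r_n(c,\beta)^d = \frac{\log(n/\beta)}{cn\theta_d}$ turns this into a power of $n$ times $\beta$. The constant $c_0$ in \eqref{eqn:defn_c_0} is precisely the threshold at which the competing exponents — the gain $n$ in front from Mecke versus the decay $\exp(-n\|S(O)\|)$ — balance: for $c<c_0$ the net exponent of $n$ is zero and the prefactors conspire to give the finite limit $\beta$, proving part (1).

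For part (2), the goal is only a lower bound forcing divergence when $c>2^d$. Here I would discard all the overlap structure and use the crude containment $\|S(O)\| \leq \|B_O(2r)\| = \theta_d (2r)^d = 2^d \theta_d r^d$, valid because every ball $B_{Y_i}(r)$ with $Y_i \in B_O(r)$ sits inside $B_O(2r)$; hence $\exp(-n\|S(O)\|) \geq \exp(-2^d n\theta_d r_n^d)$ deterministically. Plugging $n\theta_d r_n(c,\beta)^d = \frac{1}{c}\log(n/\beta)$ into \eqref{eqn:iso_mecke} gives the lower bound
\begin{equation}
\label{eqn:iso_lb}
\EXP{W_n(r_n(c,\beta))} \geq n \exp\left(-\tfrac{2^d}{c}\log(n/\beta)\right) = \beta^{2^d/c}\, n^{\,1 - 2^d/c},
\end{equation}
which tends to $\infty$ whenever $2^d/c < 1$, i.e. $c > 2^d$, establishing part (2). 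The main obstacle is part (1): the exact asymptotic evaluation of $\EXP{\exp(-n\|S(O)\|)}$ requires carefully handling the random overlaps among the $r$-balls, controlling that in the $d=2$ case the relevant integral of $\eta$ over ball configurations reproduces the condition $\eta(c)+\tfrac1c>1$ defining $c_0$, and justifying that the overlap corrections vanish (or concentrate) in the limit $n\to\infty$; the crude bound used for part (2) is by comparison routine.
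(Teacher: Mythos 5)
Your part (2) is correct and is essentially the paper's argument in different clothing: bounding the union $S(O)=\bigcup_{Y\in\Pdc\cap B_O(r)}B_Y(r)$ by $B_O(2r)$ and concluding $\EXP{W_n(r_n(c,\beta))}\ge n\exp(-2^d n\theta_d r_n(c,\beta)^d)=\beta^{2^d/c}\,n^{1-2^d/c}$ is the same computation the paper performs via the auxiliary count $\widehat W_n(2r)\le W_n(r)$ of $\Pdl$ points having no other $\Pdl$ point within distance $2r$. Your Mecke-formula identity $\EXP{W_n(r)}=n\,\EXP{\exp(-n\|S(O)\|)}$ is also correct, and is the dual of the paper's representation (\ref{eqn:int_W_n}), which conditions on $\Pdl$ rather than on $\Pdc$ and writes the exponent as $cn\|B_O(r)\cap\cC(n,r)\|$.

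The gap is in part (1). There is no ``closed expression'' for $\EXP{\exp(-n\|\bigcup_{i\le N}B_{Y_i}(r)\|)}$ obtained by summing over the Poisson number $N$ of points of $\Pdc$ in $B_O(r)$ and averaging over their uniform positions: the volume of a random union of balls has no tractable law, and the assertion that ``the competing exponents balance at $c_0$'' is precisely the statement to be proved, not an argument. Concretely, the only easy deterministic bound available in your representation is $\|S(O)\|\ge\theta_d r^d\,\1\{N\ge1\}$, which gives $n\,\EXP{e^{-n\|S(O)\|}\1\{N\ge1\}}\le n^{1-1/c}\beta^{1/c}$; this vanishes only for $c<1$, and therefore covers the case $d\ge3$ (where $c_0=1$, and where it reproduces the paper's sandwich between $\widetilde W_n$ and $\widetilde W_n+\overline W_n$) but not the essential case $d=2$, $1\le c<c_0$ with $c_0\in(1,4)$. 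For that case the paper works in the dual representation and supplies the missing ingredient: an uncovered-crossings (vacancy) estimate, Lemma \ref{lem:vacancy_estimate}, showing that the probability that $B_O(r_n)$ is not fully covered by the $r_n$-balls around the other $\Pdl$ points is $O((\log n)^2 n^{-1/c})$, combined with the deterministic coverage lower bound $1-V\ge\eta_*(c,\ep)$ valid when some $\Pdl$ point lies within $r_n(1-\ep,\beta)$ of the origin; multiplying $n\cdot n^{-\eta_*(c,\ep)}\cdot n^{-1/c}$ is exactly where the defining condition $\eta(c)+\frac1c>1$ of $c_0$ in (\ref{eqn:defn_c_0}) enters. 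Your proposal contains no substitute for this vacancy estimate, so part (1) is not established for $d=2$ beyond $c<1$.
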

For $c < c_0$, having found the radius that stabilizes the mean number of isolated nodes, the next theorem shows that  the number of isolated nodes and the largest nearest neighbour radius in $G_n(cn,r_n(c,\beta))$ converge in distribution as $n \to \infty$. Let $\stackrel{d}{\rar}$ denote convergence in distribution and $Po(\beta)$ denote a Poisson random variable with mean $\beta$.
\begin{thm}
\label{thm:lnnd}
Let $r_n(c,\beta)$ be as defined in (\ref{ass:lt_regime}) with $\beta > 0$ 
and $0 < c < c_0$. Then as $n \rar \infty$,
%
\begin{equation}
\label{eqn:po_convg} W_n(r_n(c,\beta))  \stackrel{d}{\rar} Po(\beta), 
\end{equation}
%
%
\begin{equation}
\label{eqn:lnnd} \pr{M_n \leq r_n(c,\beta)} \rar e^{-\beta}.
\end{equation}
\end{thm}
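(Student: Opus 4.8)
The plan is to establish both convergence results via the Poisson approximation machinery, treating the distributional convergence of the largest nearest neighbour radius $M_n$ as a corollary of the Poisson convergence of $W_n$. The key observation is that $\{M_n \leq r\} = \{W_n(r) = 0\}$, since $M_n$ is the supremum of radii at which an isolated node still exists; hence $\pr{M_n \leq r_n(c,\beta)} = \pr{W_n(r_n(c,\beta)) = 0}$, and once we have $W_n(r_n(c,\beta)) \stackrel{d}{\rar} Po(\beta)$, evaluating the limiting Poisson mass function at $0$ immediately gives $e^{-\beta}$, establishing (\ref{eqn:lnnd}). So the real work is entirely in proving (\ref{eqn:po_convg}).

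\textbf{Setting up the Poisson approximation.} For the convergence $W_n(r_n(c,\beta)) \stackrel{d}{\rar} Po(\beta)$, I would use the Stein--Chen method, which is the standard tool for Poisson convergence of sums of weakly dependent indicator variables in random geometric graphs. Write $W_n(r) = \sum_i \1\{X_i \text{ is isolated}\}$, where the sum ranges over points of $\Pdl$ and a point $X_i$ is isolated precisely when no $X_j$ shares an edge with it, i.e.\ when there is no $Y \in \Pdm$ lying within distance $r$ of both $X_i$ and some other $X_j$. The first step is to verify that $\EXP{W_n(r_n(c,\beta))} \to \beta$; this is exactly the content of the first part of Lemma~\ref{lem:cgs_exp_isoln} (valid since $c < c_0$), so I may invoke it directly. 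The probability that a given point is isolated factorizes through the Poisson structure: conditioning on $\Pdm$, a point $X$ is isolated iff every $Y \in \Pdm$ within distance $r$ of $X$ has \emph{no} other point of $\Pdl$ within distance $r$, and one computes this by integrating $\exp(-n \|\bigcup_{Y} B_Y(r)\|)$ against the law of the relevant $Y$'s, which is where the function $\eta$ and the constant $c_0$ enter.

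\textbf{The Stein--Chen bound.} Using the Poisson process version of the Stein--Chen method, I would bound the total variation distance between $W_n(r_n(c,\beta))$ and $Po(\EXP{W_n})$ by terms of the form $b_1 + b_2$, where $b_1$ controls the contribution of pairs of points that are close enough to be dependent (the ``local'' term, roughly $\sum_{i}\sum_{j \in \text{nbhd}(i)} [\pr{\text{$i$ iso}}\pr{\text{$j$ iso}} + \pr{\text{$i$,$j$ both iso}}]$) and $b_2$ controls residual correlations. The dependence here is genuinely short-range: whether $X_i$ is isolated depends only on $\Pdl$ and $\Pdm$ in a bounded neighbourhood of $X_i$ whose radius shrinks like $r_n(c,\beta) \to 0$, so the expected number of ``nearby'' competing points is $O(n r_n^d) = O(\log n)$ per unit volume scaled appropriately, and the standard computation shows $b_1, b_2 \to 0$. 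The hard part will be the careful estimation of the joint isolation probability $\pr{X_i, X_j \text{ both isolated}}$ for nearby pairs, because the two exclusion regions $B_{X_i}(r), B_{X_j}(r)$ — and the associated regions swept out in $\Pdm$ — overlap, so the factorization that works for a single node must be replaced by an inclusion--exclusion over the union of balls; showing that these overlap corrections are negligible relative to $\beta$ as $n \to \infty$ is the main technical obstacle, and it is precisely here that the restriction $c < c_0$ is used to guarantee the first-moment normalization survives. Once the total variation bound vanishes and the mean converges to $\beta$, (\ref{eqn:po_convg}) follows, and (\ref{eqn:lnnd}) follows as described above.
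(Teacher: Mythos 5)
Your overall strategy is the same as the paper's: deduce (\ref{eqn:lnnd}) from (\ref{eqn:po_convg}) via the identity $\pr{M_n \leq r} = \pr{W_n(r)=0}$, take the mean convergence $\EXP{W_n(r_n(c,\beta))} \to \beta$ from Lemma~\ref{lem:cgs_exp_isoln}, and control the total variation distance to $Po(\EXP{W_n(r_n(c,\beta))})$ by a Stein--Chen dependency-graph bound of the form $b_1+b_2$. That much is correct and matches the paper (its Lemma~\ref{lem:po_approx}).

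The gap is in the treatment of $b_2$, the sum of joint isolation probabilities over dependent pairs. You assert that ``the standard computation shows $b_1, b_2 \to 0$'' and then, in the same breath, concede that estimating $\pr{X_i,\, X_j \mbox{ both isolated}}$ for nearby pairs is ``the main technical obstacle'' --- but you never produce the estimate, and the obvious one fails. If you bound the joint probability by the probability that just one of the two points is isolated, the pair term becomes of order $n^2 \cdot \theta_d (5r_n)^d \cdot \pr{\mbox{a given point is isolated}} \asymp (n r_n^d)\,\EXP{W_n(r_n)} \asymp \beta \log n \to \infty$; the factor $n r_n^d = O(\log n)$ that you correctly identify as the size of the dependency neighbourhood is exactly what kills the naive bound. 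One must extract genuine additional decay from the event that the \emph{second} point is also isolated, given that the first one is. The paper does this in the estimate of $I_{2n}$: isolation of $y$ forces $\Pdc$ to additionally avoid the covered part of $B_y(r)\setminus B_x(r)$, whose volume is at least $\eta\, r^{d-1}|y-x|$ for a universal $\eta>0$, and the resulting factor $\exp\left(-cn\eta r^{d-1}|y-x|\cdot(\cdots)\right)$, after the change of variables $w = n r_n^{d-1} y$, turns the pair integral into something bounded by $(n r_n^d)^{1-d}\,\EXP{W_n(r_n)} \to 0$. Without an argument of this kind your proof of (\ref{eqn:po_convg}) is incomplete; everything else (the $b_1$ term, the invocation of Lemma~\ref{lem:cgs_exp_isoln}, and the passage to (\ref{eqn:lnnd})) is fine.
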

%

\begin{remark}
\label{rem:rem_c}
For any locally finite point process $\mathcal{X}$ (for example $\Pdl$ or $\Pdm$), we denote the number of points of $\mathcal{X}$ in $A, \; A \subset \mR^d$ by $\mathcal{X}(A)$. Define 
$$W^0_n(c,r) = \sum_{Y_i \in \Pdc } \1[\Pdl(B_{Y_i}(r))=0],$$ 
that is, $W^0_n(c,r)$ is the number of $\Pdc$ nodes isolated from $\Pdl$ nodes. From Palm calculus for Poisson point processes (Theorem 1.6, \cite{Penrose03})
and the fact that the metric is toroidal, we have
\[ \EXP{W^0_n(c,r_n(c,\beta))} = cn \int_{U} \pr{\Pdl(B_{x}(r))=0 } dx = cn \exp(-n \theta_d r_n(c,\beta)^d). \]
Substituting from (\ref{ass:lt_regime}) we get
\begin{equation}
\lim_{n \rar \infty}\EXP{W^0_n(c,r_n(c,\beta))} =
\begin{cases}
0 & \mbox{ if } c < 1 \\
\beta & \mbox{ if } c =1 \\
\infty & \mbox{ if } c > 1.
\end{cases}
\end{equation}
Thus there is a trade off between the relative density of the nodes and the radius required to stabilise the expected number of isolated nodes.
\end{remark}
\section{Proofs for Section~\ref{sec:perc_model}}
\label{sec:perc_proofs}
\sec

{\bf Proof of Proposition \ref{prop:simple_bd}}

{\sc (1).} Recall from Definition~\ref{defn:AB_Bool} the graph $G(\lam,\mu,r)$ 
with vertex set $\Phi^{(1)}$ and edge set $E(\lam,\mu,r)$. Consider the graph $\tG(\lam + \mu,r)$ (see Definition~\ref{def:basic_bool}), 
where the vertex set is taken to be $\Phi^{(1)}  \cup \Phi^{(2)}$ and let the
edge set of this graph be denoted by $\tilde{E}(\lam+\mu,r)$.

If $<X_i,X_j> \; \in E(\lam,\mu,r)$, then there exists a $Y \in \Phi^{(2)}$ such that $<X_i,Y>, <X_j,Y> \in \tilde{E}(\lam+\mu,r)$. It follows that $G(\lam,\mu,r)$ has an infinite component only if $\tG(\lam+\mu,r)$ has an infinite component.
Consequently, for any $\mu > \mu_c(\lam,r)$ we have $\mu + \lam > \lam_c(r)$, and hence $\mu_c(\lam,r) + \lam \geq \lam_c(r)$. Thus for any $\lam < \lam_c(r)$, 
we obtain the (non-trivial) lower bound $\mu_c(\lam,r) \geq \lam_c(r) -  \lam$. 

{\sc(2).} Again $<X_i,X_j> \; \in E(\lam,\mu,r)$ implies that $|X_i-X_j| \leq 4r$. Hence, $G(\lam,\mu,r)$ has an infinite component only if $\tG(\lam,2r)$ has an infinite component. Thus $\mu_c(\lam,r) = \infty$ if $\lam \leq \lam_c(2r)$. \qed
\remove{ 1. Set $\mathcal{P}_{\lam+\mu} = \Phi^{(1)} \cup \Phi^{(2)}$ and define $C(\lam + \mu)$ to be the graph with vertex set $\mathcal{P}_{\lam+\mu}$ and edge set $E(\lam + \mu,r) =  \{ \la X_i,X_j \ra : X_i, X_j \in \mathcal{P}_{\lam+\mu} , |X_i-X_j| \leq 2r \}$. The proof follows by coupling a suitable modification of $C(\lam,\mu,r)$ with $C(\lam+\mu,r)$. Define $C^{\prime}(\lam,\mu,r)$ to be the graph with vertex set $\Phi^{(1)} \cup \Phi^{(2)}$ and edge set $E^{\prime}(\lam,\mu,r) := \{ \la X_i,Y_j \ra : X_i \in \Phi^{(1)}, Y_j \in \Phi^{(2)} , |X_i-Y_j| \leq 2r \}$. Clearly $E^{\prime}(\lam,\mu,r) \subset E(\lam+\mu,r)$. Observe that there exists a bijection between components in $C(\lam,\mu,r)$ and $C^{\prime}(\lam,\mu,r)$. If $\theta(\lam,\mu,r) = 1$, then $C(\lam,\mu,r)$ has an infinite component and thus $C^{\prime}(\lam,\mu,r)$ also has an infinite component. Hence $C(\lam+\mu,r)$ percolates implying that $\mu_c(\lam,r) + \lam \geq \lam_c(r)$. }

{\bf Proof of Theorem \ref{thm:perc_threshold}}

Fix $\lam > \lam_c(2r)$. The proof adapts the idea used in \cite{Dousse06} of coupling the continuum percolation model to a discrete percolation model. For $l > 0$, let $l\mL^2$ be the graph with vertex set $l\mZ^2$, the expanded two-dimensional integer lattice, and endowed with the usual graph structure,
that is, $x,y \in l\mZ^2$ share an edge if $|x-y| = l$. Denote the edge-set by $l\mE \, ^2$. For any edge $e \in l\mE \, ^2$ denote the mid-point of $e$ by $(x_e,y_e)$. For every horizontal edge $e$, define three rectangles $R_{ei}, i=1,2,3$ as follows : $R_{e1}$ is the rectangle $[x_e-3l/4,x_e-l/4] \times [y_e-l/4,y_e+l/4]$; $R_{e2}$ is the rectangle $[x_e-l/4,x_e+l/4] \times [y_e-l/4,y_e+l/4]$ and $R_{e3}$ is the rectangle $[x_e+l/4,x_e+3l/4] \times [y_e-l/4,y_e-l/4]$. Let $R_e = \cup_iR_{ei}.$ The corresponding rectangles for vertical edges are defined similarly. The reader can refer to Figure~\ref{fig:1}.

\begin{figure}[h!t]
  \centerline{\includegraphics[scale=0.5]{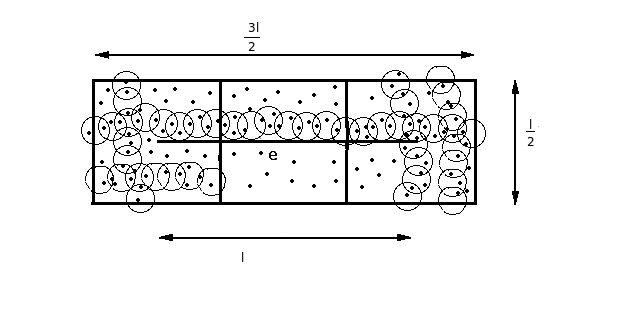}}
  \caption{An horizontal edge $e$ that satisfies the condition for $B_e = 1$. The balls are of radius $2r$, centered at points of $\Phi^{(1)}$ and the adjacent centers are of at most distance $r_1$. The dots are the points of $\Phi^{(2)}$.}
  \label{fig:1}
\end{figure}
Due to continuity of $\lam_c(2r)$ (see \cite[Theorem 3.7]{Meester96}), there exists $r_1 < r$ such that $\lam > \lam_c(2r_1)$. We shall now define some random variables associated with horizontal edges and the corresponding definitions for vertical edges are similar. Let $A_e$ be the indicator random variable for the event that there 
exists a left-right crossing of $R_e$ 
and top-down crossings of $R_{e1}$ and $R_{e3}$ by a component of $\tG(\lam,2r_1)$. 
Let $C_e$ be the indicator random variable of the event that, $\Phi^{(2)} \cap B_X(2r) \cap B_Y(2r) \neq \emptyset$ for all $X,Y \in \Phi^{(1)} \cap R_e$ such that $B_X(2r_1) \cap B_Y(2r_1) \neq \emptyset.$  
Let $B_e := \1 \{A_eC_e =1\}$ (see Figure~\ref{fig:1}).
Declare an edge $e \in l\mE \, ^2$ to be open if $B_e = 1$. We first show that for $\lam > \lam_c(2r)$ there exists a $\mu, l$ such that  $l\mL^2$ percolates (Step 1). The next step is to show that this implies percolation in the continuum model $G(\lam,\mu,r)$ (Step 2).

{\sc Step 1:} The random variables $\{B_e\}_{e \in l\mE^2}$ are $1$-dependent, that is, $B_e$'s indexed by two non-adjacent edges (edges that do not share a common vertex) are independent. Hence, given edges $e_1,\ldots,e_n \in l\mE^2$, there exists $\{k_j\}_{j=1}^m \subset \{1,\ldots,n\}$ with $m \geq n/4$ such that $\{B_{e_{k_j}}\}_{1 \leq j \leq m}$ are i.i.d. Bernoulli random variables. Hence,
\begin{equation}
\label{eqn:prob_peierl}
 \pr{B_{e_i} = 0, \ 1 \leq i \leq n} \leq \ \pr{B_{e_{k_j}} = 0, 1 \leq j \leq m} \leq  \pr{B_e = 0}^{n/4}.
\end{equation}
We need to show that for a given $\epsilon > 0$ there exists $l,\mu$, for which $\pr{B_e = 0} < \epsilon$ for any $e \in l\mE^2$.
Fix an edge $e$. Observe that
\begin{eqnarray}
 \pr{B_e = 0} & = & \pr{A_e = 0} + \pr{B_e=0|A_e=1}\pr{A_e=1} \nonumber \\
 & \leq & \pr{A_e=0} + \pr{B_e=0|A_e=1}.
\label{eqn:prob_Be}
\end{eqnarray}
Since $\lam > \lam_c(2r_1),$ $\tG(\lam, 2r_1)$ percolates. Hence by \cite[Corollary 4.1]{Meester96}, we can and do choose a $l$ large enough so that
\begin{equation}
\label{eqn:prob_A}
\pr{A_e = 0} < \frac{\epsilon}{2}.
\end{equation}
Now consider the second term on the right in (\ref{eqn:prob_Be}). Given $A_e = 1$, there exist crossings as specified in the definition of $A_e$ in $\tG(\lam,2r_1)$. Draw balls of radius $2r (> 2r_1)$ around each vertex. 
Any two vertices that share an edge in  $\tG(\lam,2r_1)$ are centered at a distance of at most $4r_1$. The width of the lens of intersection of two balls of radius $2r$ whose centers are at most $4r_1 (< 4r)$ apart is bounded below by a constant, say $b(r,r_1) > 0$. Hence if we cover $R_e$ with disjoint squares of diagonal-length $b(r,r_1)/3$, then every lens of intersection will contain at least one such square. Let $S_j, j = 1,\ldots,N(b),$ be the disjoint squares of diagonal-length $b(r,r_1)/3$ that cover $R_e$. Note that
\begin{eqnarray*}
\pr{B_e = 1|A_e = 1} & \geq & \pr{\Phi^{(2)} \cap S_j \neq \emptyset, 1 \leq j \leq N(b)} \\
& = &(1-\exp(-\frac{\mu b(r,r_1)^2}{18}))^{N(b)} \to 1, \ \mbox{as $\mu \to \infty$}.
\end{eqnarray*}
Thus for the choice of $l$ satisfying (\ref{eqn:prob_A}), we can choose a $\mu$ large enough such that
\begin{equation}
\label{eqn:prob_B}
\pr{B_e=0|A_e=1} < \frac{\epsilon}{2}.
\end{equation}
From (\ref{eqn:prob_Be}) - (\ref{eqn:prob_B}), we get $\pr{B_e = 0} < \epsilon$. Hence given any $\epsilon > 0$, it follows from (\ref{eqn:prob_peierl}) that there exists $l, \mu$ large enough so that $\pr{B_{e_i}=0 , 1 \leq i \leq n} \leq \epsilon^{n/4}$. That $l\mL^2$ percolates now follows from a standard Peierl's argument as in \cite[pp. 17, 18]{Grimmett99}.

{\sc Step 2:} By Step 1, choose $l,\mu$ so that $l \mL ^2$ percolates. Consider
any infinite component in $l \mL ^2$. Let $e,f$ be any two adjacent edges
in the infinite component. In particular $B_e = B_f =1$. This has two implications, the first one being that there exists crossings $I_e$ and $I_f$ of $R_e$ and $R_f$ respectively in $\tG(\lam,2r_1)$. Since $e,f$ are adjacent, $R_{ei} = R_{fj}$ for some $i,j \in \{1,3\}$. Hence there exists a crossing $J$ of $R_{ei}$ in $\tG(\lam, 2r_1)$ that intersects both $I_e$ and $I_f$. 
Draw balls of radius $2r$ around each vertex of the crossings $J,I_e,I_f$.
The second implication is that every pairwise intersection of  these balls will contain at least one point of $\Phi^{(2)}$. This implies that $I_e$ and $I_f$ belong to the same $AB$ component in $G(\lam,\mu,r)$. Therefore $G(\lam,\mu,r)$ percolates when $l\mL^2$ does. \qed

{\bf Proof of Proposition \ref{prop:upper_bd}.} Recall Definition~\ref{def:p_c_a}. For $d=2$, let $\mathbb{T}$ be the triangular
lattice with edge length $r_0/2$, and let $Q_z$ be the flower centred at $z \in \mathbb{T}$ as shown in Figure~\ref{fig:flower}. For $d \geq 3$, let $\mZ^{*d}_{r_0} := (\frac{r_0}{2\sqrt{d}} \mZ^d, \{<z,z_1> \in (\frac{r_0}{2\sqrt{d}} \mZ^d) \times (\frac{r_0}{2\sqrt{d}} \mZ^d) : \|z-z_1\| = \frac{r_0}{2\sqrt{d}}\})$ and $Q_z$ be the cube of side-length $\frac{r_0}{2\sqrt{d}}$ centred at $z \in \mathbb{Z}^{*d}_{r_0}$. Note that the flowers or cubes are disjoint. We declare $z$ open if $Q_z \cap \Phi^{(i)} \neq \emptyset, \; 1 \leq i \leq k$. This is clearly a Bernoulli site percolation model on $\mathbb{T} \;\; (d=2)$ or $\mathbb{Z}^{*d}_{r_0} \;\; (d \geq 3)$ with probability $\prod_{i=1}^k (1-e^{-\lam_i a(d,r_0)})$ of $z$ being open. By hypothesis, $\prod_{i=1}^k (1-e^{-\lam_i a(d,r_0)}) > p_c(d)$, the critical probability for Bernoulli site percolation on $\mathbb{T} \;\; (d=2)$ or $\mathbb{Z}^{*d}_{r_0} \;\; (d \geq 3)$ and hence the corresponding graphs percolate. Let $<z_1,z_2,...>$ denote an infinite percolating path in $\mathbb{T} \;\; (d=2)$ or $\mathbb{Z}^{*d}_{r_0} \;\; (d \geq 3)$. Since it is a percolating path, almost surely, for all $i \geq 1,$ and every $j = 1,2, \ldots , k,$ $\Phi^{(j)}(Q_{z_i}) > 0$, that is, each (flower or cube) $Q_{z_i}$ contains a point of each of $\Phi^{(1)},\ldots,\Phi^{(k)}$. Hence almost surely, for every word $\{w(i)\}_{i \geq 1}$ we can find a sequence $\{X_i\}_{i \geq 1}$ such that for all $i \geq 1, \; X_{i} \in \Phi^{(w(i))} \cap Q_{z_i}$. Further, $|X_{i} - X_{i+1}| \leq r_0 \leq r_{w(i)} + r_{w(i+1)}.$  Thus, almost surely, every word occurs. \qed

{\bf Proof of Corollary \ref{cor:upper_bound}.}   
Apply Proposition \ref{prop:upper_bd} with $k=2$, $\lam_1 = \lam$, $\lam_2 = \mu$, $r_1 = r_2 = r$, and so $r_0 = 2r$. It follows that almost surely, every word occurs provided $(1-e^{-\lam a(d,2r)})(1-e^{-\mu a(d,2r)}) > p_c(d)$. In particular, under the above condition, almost surely, the word $(1,2,1,2,\ldots)$ occurs. This implies that there is a sequence $\{X_i\}_{i \geq 1}$ such that $X_{2j-1} \in \Phi^{(1)}$, $X_{2j} \in \Phi^{(2)}$, and $|X_{2j} - X_{2j-1}| \leq 2r,$ for all $j \geq 1$. But this is equivalent to percolation in $G(\lam,\mu,r)$. This proves the corollary once we note that there exists a $\mu < \infty$ satisfying the condition above only if $(1-e^{-\lam a(d,2r)}) > p_c(d)$, or equivalently $a(d,2r)\lam > \log(\frac{1}{1 - p_c(d)})$ and the least such $\mu$ is given in the RHS of (\ref{eqn:mu_c_ub}). \qed

{\bf Proof of Corollary \ref{cor:AB_perc}.} By the given condition $(1-e^{-\lam a(d,r)/2}) > \sqrt{p_c(d)}$, and continuity, there exists an $\epsilon > 0$ such that for all $p \in (1/2-\epsilon,1/2+\epsilon)$, we have $(1-e^{-\lam pa(d,r)}) > \sqrt{p_c(d)}$. Thus for all $p \in (1/2-\epsilon,1/2+\epsilon)$, we get that $(1-e^{-\lam pa(d,r)})(1-e^{-\lam (1-p)a(d,r)}) > p_c(d).$ Hence by invoking Proposition \ref{prop:upper_bd} as in the proof of Corollary \ref{cor:upper_bound} with $\lam_1 = \lam p, \lam_2 = \lam (1-p), r_1 = r_2 = r$, we get that $\widehat{\theta}(\lam,p,r) = 1.$ \qed

\section{Proofs for Section~\ref{sec:rgg_model}}
\label{sec:rgg_proofs}
For the lower bound of connectivity threshold, the following result analogous to \cite[Theorem 7.1]{Penrose03} will suffice.
\begin{prop}
\label{prop:liminf_M_n}
Let $M_n$ and $r_n(c)$ be as defined in Definition \ref{defn:iso_lnnr} and (\ref{eqn:r_n(c,1)}) respectively. Then for any $c > 0$ and $a < 1$, $\pr{\mbox{$M_n \leq a^{\frac{1}{d}}r_n(c)$ i.o.}} = 0,$ where i.o. stands for infinitely often.
\end{prop}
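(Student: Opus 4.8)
The plan is to show that $M_n \leq a^{1/d} r_n(c)$ fails infinitely often by exhibiting, with sufficiently high probability along a subsequence, an isolated node at radius $a^{1/d} r_n(c)$. Since $a < 1$ means the radius is \emph{smaller} than the connectivity-type threshold $r_n(c)$, the event $\{M_n > a^{1/d} r_n(c)\}$ should have probability bounded away from zero (in fact tending to $1$) for large $n$, and the real work is to combine this with a Borel--Cantelli argument along a suitable deterministic subsequence. The strategy mirrors the proof of \cite[Theorem 7.1]{Penrose03}.

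First I would estimate the expected number of isolated nodes $\EXP{W_n(a^{1/d} r_n(c))}$. Using the Palm calculus for Poisson processes (as in Remark~\ref{rem:rem_c}) and the fact that a $\Pdl$-vertex $X$ is isolated precisely when no $\Pdc$-point lies within distance $a^{1/d} r_n(c)$ that is \emph{shared} with another $\Pdl$-point, one obtains a mean of the form $n \exp(-c n \theta_d a\, r_n(c)^d (1 + o(1)))$. Substituting $r_n(c)^d = \log n / (cn\theta_d)$ gives a leading term like $n \cdot n^{-a} = n^{1-a}$, which diverges since $a < 1$. Thus the expected number of isolated nodes blows up, strongly suggesting that isolated nodes are present with overwhelming probability.

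To convert a divergent mean into an almost-sure statement, a first-moment (Markov) bound is the wrong direction, so the key step is a \emph{second-moment} estimate. I would compute $\VAR{W_n(a^{1/d} r_n(c))}$ and show, via the Poisson variance/covariance structure (splitting the pair sum into disjoint and overlapping ball contributions), that $\VAR{W_n}/\EXP{W_n}^2 \to 0$. Chebyshev's inequality then gives $\pr{W_n(a^{1/d} r_n(c)) = 0} \to 0$, i.e.\ $\pr{M_n > a^{1/d} r_n(c)} \to 1$. To upgrade this to ``infinitely often almost surely,'' I would pass to a subsequence $n_k$ (for instance $n_k = k^K$ for suitable $K$) along which $\sum_k \pr{M_{n_k} \leq a^{1/d} r_{n_k}(c)} < \infty$, apply Borel--Cantelli to conclude $M_{n_k} > a^{1/d} r_{n_k}(c)$ eventually, and then use monotonicity of the relevant quantities in $n$ together with the slow variation of $r_n(c)$ to interpolate between consecutive subsequence indices, ruling out the event for all large $n$.

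The main obstacle I anticipate is the second-moment control, specifically handling the dependence introduced by the shared auxiliary process $\Pdc$: whether two $\Pdl$-nodes are isolated are not independent events because they both depend on the same $\Pdc$-configuration in the overlap of their neighborhoods. I would address this by conditioning on $\Pdc$ (or by decomposing the covariance according to whether the two balls of radius $a^{1/d} r_n(c)$ overlap), bounding the overlap contribution using the fact that the region of near-pairs has vanishing relative measure. The interpolation step also requires care: I must verify that $a^{1/d} r_n(c)$ and the isolation events vary slowly enough between $n_k$ and $n_{k+1}$ that the subsequence conclusion forces the full-sequence conclusion, which is standard but must be stated with the correct monotonicity of $W_n$ in the radius.
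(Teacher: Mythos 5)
Your first-moment calculation is right, and your overall strategy (second moment $\Rightarrow \pr{W_n = 0} \to 0$, then subsequence plus Borel--Cantelli) is a legitimate-looking alternative to what the paper does, but it has a genuine gap at the interpolation step, and the gap is not of the ``standard but tedious'' kind you suggest. Chebyshev gives at best $\pr{W_n(a^{1/d}r_n(c))=0} = O(n^{a-1}\log n)$, which is not summable for $a\in(0,1)$, so you are indeed forced onto a subsequence $n_k=k^K$; to fill in $n\in(n_k,n_{k+1})$ you then need the witnessing configuration to survive as $n$ varies. But under the natural coupling in which the Poisson processes only gain points as $n$ grows, neither $M_n$ nor the isolation events are monotone in $n$: the vertex of $\Pdl$ that you exhibited as isolated at $n_k$ can cease to be isolated the moment a new point of $\Pdc$ lands nearby (or a new point of $\Pdl$ lands so as to share a witness $Y$ with it), and the monotonicity you invoke --- that of $W_n(r)$ in the radius $r$ --- does not address this. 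To repair the argument you would have to replace ``isolated'' by a stronger witness event that is monotone under the coupling, e.g.\ a point of ${\mathcal P}_{n_k}^{(1)}$ whose ball of radius $a^{1/d}r_{n_k}(c)$ contains no point of ${\mathcal P}_{cn_{k+1}}^{(2)}$, and redo the second-moment computation for the count of such witnesses.

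The paper sidesteps all of this. It places $\sigma_n\geq\kappa n/\log n$ deterministic disjoint balls of radius $(1-\ep)^{1/d}r_n(c)$ in $U$ and, for each, considers the event that the small concentric ball of radius $\ep^{1/d}r_n(c)$ contains a point of $\Pdl$ while the big ball contains no point of the second process; these events are exactly independent because the balls are disjoint, each has probability at least $\kappa n^{\ep-1}$, so the probability that none occurs is at most $\exp(-\kappa^2 n^{\ep}/\log n)$ --- summable over the \emph{full} sequence $n$, with no subsequence and no coupling across different values of $n$ needed. The independence of the disjoint witnesses is doing the work your variance bound cannot: it converts a divergent mean into a stretched-exponentially small failure probability rather than a polynomially small one, which is what kills the interpolation problem at the source.
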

\begin{proof}
For $a < 1$, set $r_n = a^{\frac{1}{d}}r_n(c)$ and choose a $\epsilon > 0$  such that
$$ \ep^{\frac{1}{d}} + a^{\frac{1}{d}} < (1- \ep)^{\frac{1}{d}}.$$
For $x \in U$, define the events :
$$ A_n(x) := \{\Pdm(B_x((1- \ep)^{\frac{1}{d}}r_n(c))) = 0\} \cap \{\Pdl(B_x(\ep^{\frac{1}{d}}r_n(c))) \geq 1\}.$$
Choose points $x_1^n,\ldots,x_{\sg_n}^n$ in $U$ of maximal cardinality such that the balls $B_{x_i^n}((1- \ep)^{\frac{1}{d}}r_n(c)),$ $1 \leq i \leq \sigma_n$ are disjoint. By \cite[Lemma 5.2]{Penrose03}, we can choose a constant $0 < \kappa < 1$ such that for all large enough $n$
\begin{equation}
\label{eqn:sigma_n}
 \sigma_n > \kappa \frac{n}{\log n}  
\end{equation}
If $A_n(x)$ occurs for some $x \in U$, then there exists a point $X \in \Pdl \cap B_x(\ep^{\frac{1}{d}}r_n(c))$ such that for all $Y \in \Pdm$
$$ d(X,Y) \geq \left((1-\epsilon)^{\frac{1}{d}} - \epsilon^{\frac{1}{d}}\right) r_n(c) > a^{\frac{1}{d}} r_n(c),$$
by the choice of $\epsilon$. It follows that $X$ is an isolated node in $G_n(cn,r_n)$ or equivalently, $M_n > r_n.$ Therefore,
\begin{equation}
\label{eqn:subset} 
\{M_n \leq r_n \} \subset (\cup_{i =1}^{\sigma_n}A_n(x_i))^c.
\end{equation}
For all $n$ large enough we have
$$ \pr{\Pdl(B_x(\ep^{\frac{1}{d}}r_n(c))) \geq 1} = 1  - n^{-\frac{\ep}{c}} \geq \kappa,$$
and 
$$\pr{\Pdm(B_x((1- \ep)^{\frac{1}{d}}r_n(c))) = 0} =  n^{\ep-1}.$$
Since $\Pdl$ and $\Pdm$ are independent, we get that for all large enough $n$,
$$ \pr{A_n(x_i^n)} \geq \kappa  n^{\ep-1}, \qquad 1 \leq i \leq \sigma_n.$$
By the above estimate, the independence of events $A_n(x_i^n), 1 \leq i \leq \sigma_n$, (\ref{eqn:sigma_n}) and the 
inequality $1 - t \leq e^{-t}$, we get that for all large enough $n$, 
$$\pr{(\cup_{x \in \mR^d}A_n(x))^c} \leq  \pr{(\cup_{i=1}^{\sigma_n}A_n(x_i^n))^c} \leq \exp\{-\kappa \sigma_n n^{\ep-1}\} \leq \exp\{-\kappa^2 \frac{n^{\ep}}{\log n}\},$$
which is summable in $n$. It follows by the Borel-Cantelli lemma and (\ref{eqn:subset}) that for $a < 1$, with probability $1$, $M_n > r_n$ for all large enough $n$.  
\end{proof}

We now prove Theorem~\ref{thm:conn}. In the second part of this proof, we will couple our sequence of $AB$ random geometric graphs with a sequence of random geometric graphs. By a random geometric graph, we mean the graph $\uG_n(r)$ with vertex set $\Pdl$ and edge set $\{ \la X_i, X_j \ra: X_i,X_j \in \Pdl, d(X_i,X_j) \leq r \}$, where $d$ is the toroidal metric defined in (\ref{eqn:tm}). We will use the following well known result regarding strong connectivity in the
graphs $\uG_n(r)$.

\begin{thm} [Theorem 13.2, \cite{Penrose03}]
\label{thm:conn_rgg}
 For $R_n(A_0) = \left( \frac{A_0 \log n}{n \theta_d} \right)^{1/d}$, almost surely, the sequence of graphs $\uG_n(R_n(A_0))$ is connected eventually 
if and only if $A_0 > 1.$
\end{thm}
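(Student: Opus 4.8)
The plan is to prove both directions by reducing the connectivity question to the much more tractable question of whether $\uG_n(r)$ has \emph{isolated vertices}. Write $W_n(r)$ for the number of isolated vertices of $\uG_n(r)$, let $\rho_n := \sup\{r : W_n(r) > 0\}$ be the largest nearest-neighbour distance, and let $\lam_n := \inf\{r : \uG_n(r) \text{ is connected}\}$ be the connectivity threshold. A graph with an isolated vertex is disconnected, so $\lam_n \ge \rho_n$ always. The heart of the argument is the reverse asymptotic statement, that eventually $\lam_n = \rho_n$ almost surely: the graph becomes connected at precisely the instant the last isolated vertex is absorbed. Granting this, Theorem~\ref{thm:conn_rgg} reduces to showing that $W_n(R_n(A_0)) = 0$ eventually, almost surely, if and only if $A_0 > 1$, since eventually connectivity of $\uG_n(R_n(A_0))$ is equivalent to $R_n(A_0) \ge \rho_n$, i.e. to the absence of isolated vertices at radius $R_n(A_0)$.

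For the isolated-vertex count I would first record, using the toroidal metric (no boundary effects) and Palm calculus for Poisson processes exactly as in the computation of Remark~\ref{rem:rem_c} but for the single process $\Pdl$,
\[ \EXP{W_n(R_n(A_0))} = n\,\exp\!\left(-n\theta_d R_n(A_0)^d\right) = n\cdot n^{-A_0} = n^{1-A_0}. \]
For $A_0 > 1$ this tends to $0$, and the almost-sure \emph{eventual} vanishing is obtained by upgrading this in-probability statement: along a geometric subsequence $n_k = 2^k$ one has summability after picking an exponent $A_0' \in (1,A_0)$, and one interpolates to intermediate $n$ using a monotone coupling of the $\Pdl$ (realised as the points of a single marked Poisson process with time-mark at most $n$) together with the slow variation of $R_n(A_0)$, so that the block event for $n \in [n_k, n_{k+1}]$ is sandwiched between endpoint events with a slightly perturbed exponent. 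For $A_0 < 1$ one has $\EXP{W_n} \to \infty$, and a second-moment estimate $\VAR{W_n} = \EXP{W_n}(1+o(1))$ (the pair correlation being negligible since two far-apart points are nearly independently isolated) yields $W_n > 0$ eventually a.s., so the graph is disconnected eventually. The delicate case is $A_0 = 1$, where $\EXP{W_n}\to 1$ stays bounded and concentration fails; here I would use the Poisson limit $W_n \stackrel{d}{\rar} Po(1)$, so $\pr{W_n > 0} \to 1 - e^{-1} > 0$, along a sparse subsequence chosen so that the events are asymptotically independent, and apply the second Borel--Cantelli lemma to conclude $W_n > 0$ infinitely often a.s., hence not eventually connected.

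The reduction $\lam_n = \rho_n$ eventually a.s. would be proved by ruling out, near the threshold radius, the existence of any connected component other than a single isolated vertex. For $2 \le k \le n/2$ let $N_{n,k}(r)$ count components on exactly $k$ vertices; I would bound $\EXP{N_{n,k}(r)}$ by integrating over the positions of the $k$ points, constraining them to form a connected (tree-like, diameter at most $(k-1)r$) configuration and requiring the $r$-neighbourhood of the cluster to be empty of further points. A geometric/isoperimetric lower bound on the volume of that enlarged neighbourhood makes $\EXP{N_{n,k}(R_n(A_0'))}$ decay fast in $k$, so that $\sum_{k\ge 2}\EXP{N_{n,k}}$ is summable in $n$ along the subsequence and negligible beside $\EXP{W_n}$; Borel--Cantelli then forces the only small components to be isolated vertices, giving $\lam_n = \rho_n$ eventually.

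The main obstacle is this last reduction: controlling components of intermediate size requires a genuine isoperimetric lower bound on the enlargement of a connected cluster of $k$ points, and the sum over all $k$ up to $n/2$ must be dominated by the isolated-vertex contribution — this is exactly the ``connectivity equals absence of isolated vertices'' phenomenon that makes the sharp threshold hard. A secondary genuine difficulty is the almost-sure infinitely-often direction at the critical exponent $A_0 = 1$, where the expected number of isolated vertices neither vanishes nor diverges, so one cannot use concentration and must instead extract an almost-independent sparse subsequence for the second Borel--Cantelli lemma.
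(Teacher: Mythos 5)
You should first note that the paper does not prove this statement at all: it is quoted verbatim as Theorem 13.2 of Penrose's book and used as a black box in the proof of the paper's Theorem 3.2 (the coupling step in the proof of (\ref{eqn:limsup})). So there is no internal proof to compare against; your proposal is an attempt to re-derive a textbook theorem. Your overall architecture is indeed the standard one behind that theorem: reduce connectivity to the absence of isolated vertices, compute $\EXP{W_n(R_n(A_0))} = n\,e^{-n\theta_d R_n(A_0)^d} = n^{1-A_0}$ by Palm calculus on the torus (this computation is correct), and upgrade to almost-sure statements by a subsequence argument with a nested coupling — the same device the paper itself uses in its proof of (\ref{eqn:limsup}). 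Two of your refinements can also be simplified: if the statement is read as ``[almost surely eventually connected] if and only if $A_0>1$'', then for $A_0 \leq 1$ you do not need any Borel--Cantelli argument at all, since $\liminf_n \pr{W_n(R_n(A_0)) > 0} > 0$ (from the second moment bound when $A_0<1$, or the Poisson limit when $A_0=1$) already contradicts almost-sure eventual connectivity by dominated convergence; the sparse-subsequence second Borel--Cantelli is needed only for the stronger ``disconnected i.o.\ a.s.'' claim, and there the asymptotic independence under the nested coupling is asserted, not proved.

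The genuine gap is the step you yourself flag as the main obstacle, and the mechanism you propose for it is wrong, not merely incomplete. You claim that an isoperimetric lower bound on the volume of the $r$-neighbourhood of a connected $k$-point cluster makes $\EXP{N_{n,k}(r)}$ ``decay fast in $k$''. But the $r$-neighbourhood of a connected $k$-cluster does \emph{not} grow with $k$: all $k$ points may lie in a ball of radius $\epsilon r$, in which case the region required to be vacant has volume barely above $\theta_d r^d$ no matter how large $k$ is. Consequently a spanning-tree union bound using only the minimal vacancy $e^{-n\theta_d r^d}$ gives, at $r = R_n(A_0)$, something like $\EXP{N_{n,k}} \lesssim n^{1-A_0}\,k^{k-2}(A_0\log n)^{k-1}/k! \approx n^{1-A_0}(eA_0\log n)^{k-1}k^{-5/2}$, which \emph{grows} geometrically in $k$, and its sum over $2 \leq k \leq n/2$ diverges. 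The decay cannot come from the cluster size; it must come from the excess vacant volume as a function of the cluster's \emph{diameter}, traded off against the configuration integral: a component of diameter $D$ forces an additional vacant volume of order $r^{d-1}D$, so small-diameter components of any size are suppressed by a factor of order $1/\log n$ relative to isolated vertices (as in the two-point computation), while large-diameter components are exponentially negligible; one must also separately rule out two ``large'' components coexisting. Organizing this dichotomy correctly is the substantial content of Penrose's Chapter 13, and without it your reduction $\lam_n = \rho_n$ eventually a.s.\ — and hence the whole proof — does not go through.
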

{\bf Proof of Thm \ref{thm:conn}.} Again, let $r_n = a^{\frac{1}{d}}r_n(c)$, where 
$r_n(c) = r_n(c,1)$ is as defined in (\ref{eqn:r_n(c,1)}). It is enough to show the following for $c > 0$ : 
\begin{eqnarray}
\label{eqn:liminf} \mbox{For } \, a < 1, \,& & \pr{G_n(cn,r_n) \mbox{ is connected i.o.}} \leq \pr{ \mbox{$M_n \leq r_n$ i.o.}} = 0 \, \, \, \mbox{and} \\
\label{eqn:limsup}  \mbox{for } \, a > \al(c), \, & & \pr{G_n(cn,r_n) \mbox{ is not connected i.o.}} = 0.
\end{eqnarray}
(\ref{eqn:liminf}) and (\ref{eqn:limsup}) give the lower and upper bounds in (\ref{eqn:connect}) respectively. (\ref{eqn:liminf}) follows immediately from Proposition \ref{prop:liminf_M_n}. 

We now prove (\ref{eqn:limsup}). Since $a > \al(c)$, by definition $a \eta(a, c) > 1$. By continuity, we
can, and do choose $A_0 > 1$ such that $a \eta(a,A_0 c) >1.$ Choose $\ep \in (0,1)$ 
so that
\begin{equation}
(1-\ep)^2 a \eta(a,A_0 c) > 1.
\label{eqn:cond_ep} 
\end{equation}
Let $R_n = R_n(A_0)$, where $R_n(A_0)$ is as defined in Theorem~\ref{thm:conn_rgg}. For each $X_i \in \Pdl$, define the event
$$A_i(n,m,r,R) := \{\mbox{$X_i$ connects to all points of $\Pdl \cap B_{X_i}(R)$ in $G_n(m,r)$}\},$$
and let 
$$B(n,m,r,R) = \cup_{X_i \in \Pdl} \; A_i(n,m,r,R)^c.$$ 
We want to show that the event that every point of $\Pdl$ is connected in $G_n(cn,r_n)$ to all points of $\Pdl$ 
that fall within a distance $R_n(A_0)$ for all $n$ large enough, happens almost surely, or equivalently,
$$ \pr{ B(n,cn,r_n,R_n) \;\; i.o.} = 0.$$
We will use a subsequence argument and the Borel-Cantelli Lemma to show this.
%
Observe that $B(n,m,r,R) \subset B(n_1,m_1,r_1,R_1)$, provided $n \leq n_1,m \geq m_1, r \geq r_1, R \leq R_1$.  Let $n_j = j^{b}$ for some integer $b > 0$ that will be chosen later. 
Since $B(n,cn,r_n,R_n) \subset B(n_{j+1},cn_j,r_{n_{j+1}},R_{n_j})$, for $n_j \leq n \leq n_{j+1}$, 
\begin{equation}
\cup_{n = n_j}^{n_{j+1}} \; B(n,cn,r_n,R_n) \subset B(n_{j+1},cn_j,r_{n_{j+1}},R_{n_j}).
\label{eqn:B_n_subset} 
\end{equation}
Let $p_{j} = \pr{A_i(n_{j+1},cn_j,r_{n_{j+1}},R_{n_j})^c}$. Let $N_n = \Pdl([0,1]^2)$. From (\ref{eqn:B_n_subset}) 
and the union bound we get
\bea
\pr{\cup_{n=n_j}^{n_{j+1}}B(n,cn,r_n,R_n)} & \leq & \pr{B(n_{j+1},cn_j,r_{n_{j+1}},R_{n_j})} \no \\
& \leq &  \pr{\cup_{i=1}^{N_{n_{j+1}}} A_i(n_{j+1},cn_j,r_{n_{j+1}},R_{n_j})^c} \no \\
& \leq & \sum_{i=1}^{n_{j+1}+n_{j+1}^{\frac{3}{4}}} \pr{A_i(n_{j+1},cn_j,r_{n_{j+1}},R_{n_j})^c} + \pr{|N_{n_{j+1}}-n_{j+1}| > n_{j+1}^{\frac{3}{4}}} \no \\
& \leq & 2n_{j+1} \, p_{j} + \pr{|N_{n_{j+1}}-n_{j+1}| > n_{j+1}^{\frac{3}{4}}}. \label{eqn:prob_Ai}
\eea
\remove{Now, we shall focus on the first term. Let $\theta_d r_{n_{j+1}}^d \zeta(r_{n_{j+1}},R_{n_j}) = \inf \{ \|B_O(r_{n_{j+1}}) \cap B_{x}(r_{n_{j+1}})\| : x \in B_O(R_{n_j}) \}$ and $x_n$ be the point at which the infimum is attained. Thus, $x_n \in \partial B_O(R_{n_j})$. From Lemma \ref{lem:area_lens},   $\zeta(r_{n_{j+1}},R_{n_j}) = \eta(r_{n_{j+1}},R_{n_j})$ when $d=2,$ and $\zeta(r_{n_{j+1}},R_{n_j}) \geq \eta(r_{n_{j+1}},R_{n_j})$ when $d \geq 3$.} 

We now estimate $p_{j}$. Let $e_1 =(1,0, \ldots ,0) \in \mR^d$. Conditioning on the number of points of 
$\P_{n_{j+1}}$ in $B_O(R_{n_j})$ and then using the Boole's inequality, we get
\bea
p_{j} & \leq &  \sum_{k=0}^{\infty} \frac{(n_{j+1}\theta_d R_{n_j}^d)^ke^{-n_{j+1}\theta_d R_{n_j}^d}}{k!} \frac{k}{\theta_d R_{n_j}^d} \int_{B_O(R_{n_j})} e^{-cn_j\|B_O(r_{n_{j+1}})\cap B_x(r_{n_{j+1}})\|} dx \no \\
& \leq & \sum_{k=0}^{\infty} \frac{(n_{j+1}\theta_d R_{n_j}^d)^ke^{-n_{j+1}\theta_d R_{n_j}^d}}{k!} \frac{k}{\theta_d R_{n_j}^d} \int_{B_O(R_{n_j})} e^{-cn_j\|B_O(r_{n_{j+1}})\cap B_{R_{n_j}e_1}(r_{n_{j+1}})\|} dx  \no \\
& = & n_{j+1}\theta_d R_{n_j}^d e^{-c n_j \theta_d r_{n_{j+1}}^d \eta(r_{n_{j+1}}^d,R_{n_j}^d)},
\label{eqn:p_j_int}
\eea
where $\eta(\cdot, \cdot)$ is as defined in (\ref{eqn:defn_eta}). Since
\[ \frac{R_{n_j}}{r_{n_{j+1}}} = \left( \frac{A_0 \log n_j}{\theta_d n_j}
\frac{c n_{j+1} \theta_d}{a \log n_{j+1}} \right)^{\frac{1}{d}}
\to \left( \frac{A_0 c}{a} \right)^{\frac{1}{d}}, \]
by the continuity of $\eta(.,.)$ (this follows from (\ref{eqn:exp_eta})), we have
\begin{equation} \eta(r_{n_{j+1}}^d,R_{n_j}^d) \geq (1-\ep) \; \eta(a,A_0 c),
\label{eqn:asy_bound_L}
\end{equation}
for all sufficiently large $j$.
For all $j$ sufficiently large, we also have
$(\frac{j}{j+1})^{b} \geq (1-\ep)$. Using (\ref{eqn:asy_bound_L}) and
simplifying by substituting for $R_{n_j}$ and $r_{n_{j+1}}$ in (\ref{eqn:p_j_int}),
for all sufficiently large $j$, we have
\beas
p_{j} & \leq & \frac{(j+1)^{b} \, A_0 \, b \,\log j}{j^{b}} e^{-\frac{j^b}{(j+1)^b} \, (1-\ep) \, \eta(a, A_0 c) \, a \, b \, \log(j+1)} \\
& \leq &   \frac{ A_0 \, b\, \log j}{(1-\ep)} e^{- (1-\ep)^2 \, \eta(a,A_0c) \, a \, b \, \log(j+1)} \\
& = &  \frac{ A_0 \, b \,\log j}{(1-\ep) (j+1)^{(1-\ep)^2 \, \eta(a,A_0c) \, a \, b}}.
\eeas
Hence 
\begin{equation}
n_{j+1} \, p_{j} \leq \frac{ A_0 \, b \,\log j}{(1-\ep) (j+1)^{((1-\ep)^2 \, \eta(a,A_0c) \, a \, - 1)b}}.
\label{eqn:bound_n_p_n}
\end{equation} 
Using (\ref{eqn:cond_ep}), we can choose $b$ large enough so that
$((1-\ep)^2 \, \eta(a,A_0c) \, a \, - 1)b > 1.$ It then follows from
(\ref{eqn:bound_n_p_n}) that the first term on the right in (\ref{eqn:prob_Ai}) is summable in $j$. From \cite[Lemma 1.4]{Penrose03}, the second term on the right in (\ref{eqn:prob_Ai}) is also summable. 
\remove{
Choose $\alpha > 0$ such that $(\gamma (\eta(a,A_0c)-\epsilon)a - 1)\al - 1 = a_0 > 0 $. Then $\frac{n_{j+1}}{{(j+1)^{\gamma (\eta(a,A_0c)-\epsilon)a \al}}} = (j+1)^{-a_0-1}$ and so
$$ \sum_{j \geq 0} \pr{\cup_{k=n_j}^{n_{j+1}}B(n_{k},cn_k,r_{n_k},R_{n_k})} \leq \sum_j \left( \frac{2\al A_0\log j}{\gamma (j+1)^{a_0+1}} + \pr{|N_{n_{j+1}}-n_{j+1}| > n_{j+1}^{\frac{3}{4}}} \right) < \infty.$$}

Hence by the Borel-Cantelli Lemma, almost surely, only finitely many of the events 
$$\cup_{n=n_j}^{n_{j+1}}B(n,cn,r_n,R_n)$$ 
occur, and hence only finitely many of the events $B(n,cn,r_n,R_n)$ occur. This implies that almost surely, every vertex in $G_n(cn,r_n)$ is connected to 
every other vertex that is within a distance $R_n(A_0)$ from it, for all large $n$. Since $A_0 > 1,$ it follows from Theorem~\ref{thm:conn_rgg} that almost surely, $G_n(cn,r_n)$ is connected eventually.
This proves (\ref{eqn:limsup}). \qed

Towards a proof of Lemma \ref{lem:cgs_exp_isoln}, we first derive a vacancy estimate similar to \cite[Theorem 3.11]{Hall88}. For any locally finite point process $\mathcal{X} \subset U,$ the coverage process is defined as 
\begin{equation}
\cC(\mathcal{X},r) := \bigcup_{X_i \in \mathcal{X}}B_{X_i}(r),
\label{eqn:cov}
\end{equation}
and we abbreviate $\cC(\Pdl,r)$ by $\cC(n,r)$. Recall that for any $A \subset \mR^d$, we write $\mathcal{X}(A)$ to be the number of points of $\mathcal{X}$ that lie in the set $A$. 
\begin{lem}
\label{lem:vacancy_estimate}
For $d =2$ and $0 < r < \frac{1}{2}$, define $V(r) := 1 - \frac{\|B_O(r) \cap \cC(n,r)\|}{\pi r^2}$, the normalised vacancy in the $r$-ball. Then $$ \pr{V(r) > 0} \leq (1 + n\pi r^2 + 4(n\pi r^2)^2) \; \exp( -n \pi r^2).$$
\end{lem}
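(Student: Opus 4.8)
The plan is to control $\pr{V(r)>0}$ by the expected number of connected vacant clumps inside $B_O(r)$ and to evaluate that expectation by a first-moment (Campbell--Mecke) computation, in the spirit of the clump enumeration of \cite[Theorem 3.11]{Hall88}. Since $2r<1$, whether a point $z\in B_O(r)$ is covered depends only on the points of $\Pdl$ lying in $B_O(2r)$, and this ball involves no toroidal wrap-around; I may therefore argue as though $\Pdl$ were a homogeneous Poisson process of intensity $n$ on $\mR^2$, with every $r$-ball of area $\pi r^2$. Write $\nu:=n\pi r^2$ and $W:=B_O(r)\setminus\cC(n,r)$, so that $V(r)>0$ exactly when $W$ has positive area, i.e. $W\neq\emptyset$ up to a null set. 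By Markov's inequality $\pr{W\neq\emptyset}\le\EXP{N}$, where $N$ denotes the number of connected components of $W$; it thus suffices to prove $\EXP{N}\le(1+\nu+4\nu^2)e^{-\nu}$.

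First I would enumerate the clumps by their lowest point (the point of smallest second coordinate), which, after discarding a null event of degenerate configurations, is a downward-pointing feature of $\partial W$ of one of three kinds: (i) the bottom $(0,-r)$ of the enclosing ball $B_O(r)$; (ii) the top $X_i+(0,r)$ of a single disc $B_{X_i}(r)$, lying in $B_O(r)$ and uncovered by the remaining discs; or (iii) a downward cusp at an intersection $\partial B_{X_i}(r)\cap\partial B_{X_j}(r)$ inside $B_O(r)$, again uncovered by the remaining discs. Case (i) contributes at most $\pr{(0,-r)\ \text{uncovered}}=e^{-\nu}$. For (ii), the Campbell--Mecke formula gives $n\int_{\{x:\,x+(0,r)\in B_O(r)\}}e^{-n\pi r^2}\,dx=n\cdot\pi r^2\cdot e^{-\nu}=\nu e^{-\nu}$. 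For (iii), the two-point Mecke formula bounds the contribution by $\tfrac{n^2}{2}e^{-\nu}\iint\#\{p\in B_O(r):p\in\partial B_x(r)\cap\partial B_y(r)\}\,dx\,dy$; I would evaluate this integral by the change of variables $(x,y)\mapsto(p,\theta_1,\theta_2)$, where $p$ is the intersection point and $\theta_1,\theta_2$ are the directions from $p$ to $x$ and $y$, whose Jacobian is $r^2|\sin(\theta_1-\theta_2)|$. The integral then factors as $\|B_O(r)\|\cdot r^2\int_0^{2\pi}\!\int_0^{2\pi}|\sin(\theta_1-\theta_2)|\,d\theta_1\,d\theta_2=\pi r^2\cdot r^2\cdot 8\pi=8(\pi r^2)^2$, so that (iii) contributes at most $4\nu^2e^{-\nu}$. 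Adding the three cases yields the claimed bound.

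The hard part will be the topological bookkeeping that makes these three terms an honest upper bound for $\EXP{N}$. Two issues arise: a single clump may possess several lowest points, so the naive count of minima over-counts components; and a clump meeting the boundary $\partial B_O(r)$ has its lowest point at an intersection of a disc with $\partial B_O(r)$ rather than at one of the three interior features, which threatens to add a spurious $O(\nu)e^{-\nu}$ term. I expect to resolve both by the horizontal sweep-line (Morse/Euler) identity $N=\#\{\text{minima}\}-\#\{\text{saddles}\}$: as the sweep line rises, components are born at downward features and merge at saddles (upward cusps and disc-bottoms), and the saddles precisely offset the excess minima and the boundary cusps, leaving only the single constant term $e^{-\nu}$ coming from the bottom of $B_O(r)$. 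Carrying out this cancellation carefully --- equivalently, running Hall's clump count with the enclosing ball as the hard boundary --- is the delicate step; the moment computations above, including the Jacobian integral producing the constant $4$, are then routine.
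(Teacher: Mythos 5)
Your moment computations are correct and, pleasingly, reproduce the three terms of the lemma exactly: with $\nu=n\pi r^2$, the three features contribute $e^{-\nu}$, $\nu e^{-\nu}$ and $4\nu^2e^{-\nu}$ (the Jacobian $r^2|\sin(\theta_1-\theta_2)|$ and the resulting constant $4$ check out). But the proof is incomplete, and the missing piece is exactly the one you flag as ``delicate'': the inequality $\EXP{N}\le e^{-\nu}+\nu e^{-\nu}+4\nu^2e^{-\nu}$ requires that every connected component of $B_O(r)\setminus\cC(n,r)$ be charged, injectively, to one of your three feature types, and this is a genuine geometric claim rather than bookkeeping. The obstruction is real: a component can have \emph{all} of its local minima at intersections of some $\partial B_{X_i}(r)$ with the window boundary $\partial B_O(r)$ --- take a single disc covering $(0,-r)$; the remaining crescent is born, in your sweep, at the two circle--window cusps and at neither of your three features. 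The sweep-line identity $N=\#\{\mbox{minima}\}-\#\{\mbox{merge saddles}\}$ only gives the usable upper bound $N\le\#\{\mbox{minima}\}$, and the minima include these window cusps, whose expected number is of order $8\nu e^{-\nu}$; adding them would yield a strictly weaker constant than the lemma asserts. To get the stated bound you must show that every such component nevertheless contains a counted feature in its closure. In the crescent example the rescue is that the disc top --- a type (ii) feature --- lies in the closure of the component even though it is a saddle, not a minimum; but in general the relevant disc tops may themselves be covered by further discs and the circle--circle crossings may fall outside $B_O(r)$, so a nontrivial case analysis remains (one fact you would need is that any disc covering $(0,-r)$ necessarily has its top inside $\overline{B_O(r)}$). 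As written, the crux is asserted, not proved.

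For contrast, the paper avoids the window-boundary problem altogether by a different decomposition: it splits according to whether $\Pdl(B_O(r))$ is $0$, $1$, or at least $2$ (the first two events directly giving $e^{-\nu}$ and $\nu e^{-\nu}$), and on the last event it argues that positive vacancy forces some ball centred at a point of $\Pdl\cap B_O(r)$ to carry at least two uncovered circle--circle crossings \emph{anywhere} in $U$, so that $p_3\le\frac{1}{2}\EXP{M}=4\nu^2e^{-\nu}$ with $M$ the number of such crossings. Since those crossings are not required to lie in $B_O(r)$, circle--window features never enter the count. If you wish to salvage the clump-enumeration route you should either prove the injective charging claim in full, or adopt the paper's conditioning, which yields the first two terms for free.
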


{\bf Proof of Lemma~\ref{lem:vacancy_estimate}.} 
Write $\pr{V(r) > 0} \leq p_1 + p_2 + p_3$, where
\begin{eqnarray*}
p_1 & = & \pr{\Pdl(B_O(r)) = 0} = \exp(-n\pi r^2), \\
p_2 & = & \pr{\Pdl(B_O(r)) = 1} = n\pi r^2\exp(-n\pi r^2 ), \\
p_3 & = & \pr{\Pdl(B_O(r)) > 1, V(r) > 0}. 
\end{eqnarray*}
We shall now upper bound $p_3$ to complete the proof. A \emph{crossing} is defined as a point of intersection of the boundaries of two balls (all the balls mentioned in this proof are assumed to have a radius $r$) centred at points of $\Pdl$. A crossing is said to be \emph{covered} if it lies in the interior of another ball centred at a point of $\Pdl$, else it is said to be \emph{uncovered}. If there is more than one point of $\Pdl$ in $B_O(r)$, then there exists at least one crossing in $U$. If $V(r) > 0$ and there exists more than one ball centred at a point of $\Pdl$ in $B_O(r)$, then there exists at least one such ball with two uncovered crossings on its boundary. Denoting the number of uncovered crossings by $M$, we have that $$p_3 \leq \pr{M \geq 2} \leq \frac{\EXP{M}}{2}.$$ 

Note that balls centred at distinct points can have at most $2$ crossings and almost surely, all the points of $\Pdl$ are distinct. Thus, given a ball, the number of crossings on the boundary of the ball is twice the number of balls centred at a distance within $2r$. This number has expectation $2\int_0^{2r} 2n\pi x \ dx = 8 n\pi r^2$, where $2n\pi x \ dx$ is the expected number of balls whose centers lie between $x$ and $x+ \ dx$ of the center of the given ball. Thus, 
$$\EXP{M} = \EXP{\Pdl(B_O(r))}8n\pi r^2 \pr{\mbox{a crossing is uncovered}} = 8 (n\pi r^2)^2 \exp(-n\pi r^2). \qed $$
\remove{ 
\begin{lem}
\label{lem:area_lens}
For any $r > 0$ and $x \in \mR^d$ with $0 \leq R = |x| \leq 2r$, 
define $L(r,R) := \|B_O(r) \cap B_x(r)\|$. Then 
%
\begin{eqnarray}
L(r,R) & = & \left( 2 \phi\left( \frac{R}{2r} \right) - 
\sin \left( 2\phi \left( \frac{R}{2r} \right) \right) \right) r^2, \qquad \mbox{if } d=2,  \nonumber \\
L(r,R) & \geq &  \theta_d \left(r - \frac{R}{2} \right)^d, \qquad  \mbox{if } d\geq 2,
\label{eqn:defn_L}
\end{eqnarray}
where $\phi(a) = \arccos(a)$. 
\end{lem}
{\bf Proof of Lemma \ref{lem:area_lens}.}
\begin{figure}[h!t]
  \centerline{\includegraphics[scale=0.5]{fig2.eps}}
  \caption{$|x| = R, \phi = \phi(\frac{r}{2R})$ and $L(r,R)$ is the area of the lens of intersection, the shaded region.}
  \label{fig:3}
\end{figure}

\selfnote{REDRAW FIGURE }

Let $d = 2$. From Figure \ref{fig:3},
it is clear that $L(r,R)$ is cut into two equal halves by the line $PQ$ and the area of each of those halves is the area enlosed between the chord $PQ$ in the ball $B_O(r)$ and its circumference. The area of the angular sector $OPQ$ (with $PQ$ considered as the arc along the boundary of the ball) is $\phi\left( \frac{R}{2r} \right) r^2$. The area of the triangle $OPQ$ is 
\[ r \sin \left( \phi \left( 
\frac{R}{2r} \right) \right) \times r \cos \left( \phi \left( 
\frac{R}{2r} \right) \right) = \frac{r^2}{2}\sin \left( 2\phi \left( 
\frac{R}{2r} \right) \right).\] 
Hence $L(r,R) = \left(2\phi\left( 
\frac{R}{2r} \right) - \sin\left(2\phi\left( 
\frac{R}{2r} \right)\right)\right)\,r^2$. 

Consider the case $d \geq 2$. The width of the lens of intersection 
of the balls $B_O(r)$ and $B_x(r)$ is $2r-R$. Thus the lens of intersection contains a ball of diameter $2r-R$. Hence the volume of such a ball, $\theta_d(r-\frac{R}{2})^d$, is a lower bound for $L(r,R)$. \qed
}

{\bf Proof of Lemma~\ref{lem:cgs_exp_isoln}.} We first prove the second part of the Lemma which is easier.

{\sc(2).} Let $\widehat{W}_n(r)$ be the number of $\Pdl$ nodes for which there 
is no other $\Pdl$ node within distance $r$. Note that $\widehat{W}_n(2r) \leq W_n(r).$ By this inequality and the Palm calculus, we get
\begin{eqnarray*}
\EXP{ W_n(r_n(c,\beta))} & \geq & \EXP{\widehat{W}_n(2r_n(c,\beta))} \\ 
& = & n \int_{U} \pr{\Pdl(B_x(2r_n(c,\beta))) = 0} \, dx \\
 & =  & n \, \exp( - 2^d n \theta_d r_n^d(c,\beta) ) 
\; = \; n \, \exp\left( - \frac{2^d}{c} \log (\frac{n}{\beta}) \right) \;  \to \; \infty, 
\end{eqnarray*}
as $n \to \infty$ since $c > 2^d$.

{\sc (1).} We prove the cases $d = 2$ and $d \geq 3$ separately. 

Let $d \geq 3$ and fix $c < 1$. Define $\widetilde{W}_n(c,r)$ to be the number of $\Pdl$ nodes for which there is no $\Pdc$ nodes within distance $r$ and $\overline{W}_n(c,r)$ be the number of $\Pdc$ nodes with only one $\Pdl$ node within distance $r$. Note that 
\begin{equation} \widetilde{W}_n(c,r) \leq W_n(r) \leq \widetilde{W}_n(c,r) + \overline{W}_n(c,r).
\label{eqn:lb_ub_iso}
\end{equation} 
By Palm calculus for Poisson point processes, we have
\begin{eqnarray}
\EXP{\widetilde{W}_n(c,r_n(c,\beta))} & = & n \int_{U} \pr{\Pdc(B_x(r_n(c,\beta))) = 0} dx \no \\
& = & n \exp (-cn\theta_d r^d_n(c,\beta)) \; = \; \beta, \label{eqn:cgs_bound1_iso}\\
\EXP{\overline{W}_n(c,r_n(c,\beta))} & = & cn\int_{U} \pr{\Pdl(B_x(r_n(c,\beta))) = 1} dx \no \\
 & = &  c \, n \exp (-n \theta_d r^d_n(c,\beta)) \, n \, \theta_d \, r^d_n(c,\beta)  \to 0, 
\label{eqn:cgs_bound2_iso}
\end{eqnarray}
%
since $c < 1$. It follows from (\ref{eqn:lb_ub_iso}), (\ref{eqn:cgs_bound1_iso}) and (\ref{eqn:cgs_bound2_iso}) that $\EXP{W_n(r_n(c,\beta))} \to \beta$, as $n \to \infty$, if $d \geq 3$ and $c < c_0 = 1$.

Now let $d=2$, fix $c < c_0$, where $c_0$ is as defined in 
(\ref{eqn:defn_c_0}) and let $n$ be large enough such that $r_n(c,\beta) < \frac{1}{2}$. For any $X \in \Pdl$, using
(\ref{eqn:cov}), the degree of $X$ in the graph $G_n(cn,r)$ can be written as 
\[ deg_n(cn,X) := \sum_{X_j \in \Pdl}\1 \{<X_j,X> \in E_n(cn,r) \} 
= \Pdl ( \cC((\Pdc \cap B_X(r)), r) \setminus \{X\} ).  \] 
%
Since 
\begin{equation}
\label{eqn:2rep_isol_nodes}
\{ \Pdl ( \cC((\Pdc \cap B_X(r)) ,r) \setminus \{X\})  = 0 \} = 
\{ \Pdc( B_{X}(r) \cap \cC(\Pdl \setminus \{ X \},r)) = 0 \},
\end{equation} 
we have
\begin{equation}
\label{eqn:rep_isol_nodes}
W_n(r) = \sum_{X_i \in \Pdl} \1 \{ deg_n(cn,X_i) = 0 \} = 
\sum_{X_i \in \Pdl} \1 \{\Pdc(B_{X_i}(r) \cap 
\cC(\Pdl \setminus \{ X_i \},r)) = 0 \}.
\end{equation} 
By Palm calculus for Poisson point processes (and the metric being toroidal) we have,
\be
\label{eqn:exp_isoln_palm}
 \EXP{W_n(r)} =  n \int_{U}\EXP{\1\{deg_n(cn,x) = 0\}}dx = n \pr{\Pdc(B_O(r) \cap \cC(n,r)) = 0},
\ee
where $\cC(n,r) = \cC(\Pdl,r)$. For any bounded random closed set $F$, conditioning on $F$ and then taking expectation, we have 
\begin{equation} \pr{\Pdc(F) = 0} = \EXP{\exp (-cn \|F\|)}. 
\label{eqn:exp_cov_rand_set}
\end{equation} 
Thus from (\ref{eqn:exp_isoln_palm}), (\ref{eqn:exp_cov_rand_set}) we get 
\begin{equation}
\EXP{W_n(r)} = n \ \EXP{\exp (-cn \|B_O(r) \cap \cC(n,r)\|)} = n \ \EXP{ \exp(-cn\pi r^2(1-V(r)))},
\label{eqn:int_W_n}
\end{equation} 
where $V(r)$ is as defined in Lemma~\ref{lem:vacancy_estimate}. Let $\eta(c) = \eta(1,c)$ be as defined in (\ref{eqn:defn_eta}) and $e_1 = (1,0)$. Since $\frac{r_n(1,\beta)}{r_n(c,\beta)} = c^{\frac{1}{2}}$, by (\ref{eqn:exp_eta}) we have
\begin{equation}  
\frac{\|B_O(r_n(c,\beta)) \cap B_{r_n(1,\beta)e_1}(r_n(c,\beta))\|}
{\pi r_n(c,\beta)^2} =  \eta(c). 
\end{equation}
Given $c < c_0$, by continuity, we can choose an $\ep \in (0, 1)$, such that 
%
%
%
\begin{equation}
\eta_*(c,\ep) = \frac{\|B_O(r_n(c,\beta)) \cap B_{r_n(1 - \ep,\beta)e_1}(r_n(c,\beta))\|}{\pi r_n(c,\beta)^2} 
\, \, \, \, \, \, \, \, \mbox{satisfies} \, \, \, \, \, \, \, \, \eta_*(c,\ep) + \frac{1}{c} > 1.
\label{eqn:cond_A_c}
\end{equation} 
Let $N_n = \Pdl(B_O(r_n(1 - \ep,\beta)))$. Thus, we have
\begin{eqnarray}
\EXP{W_n(r_n(c,\beta))} & = & 
n \, \EXP{e^{-cn\pi r_n^2(c,\beta)(1-V(r_n(c,\beta)))}\1\{V(r_n(c,\beta))=0\}} \nonumber \\ 
&  & + n \, \EXP{e^{-cn\pi r_n^2(c,\beta)(1-V(r_n(c,\beta)))}\1\{V(r_n(c,\beta)) > 0, N_n = 0\}} 
\nonumber\\
& & + n \, \EXP{e^{-cn\pi r_n^2(c,\beta)(1-V(r_n(c,\beta)))}\1\{V(r_n(c,\beta)) > 0, N_n > 0\}}. 
\label{eqn:E_W_n}
\end{eqnarray}
Consider the first term in (\ref{eqn:E_W_n}). From Lemma \ref{lem:vacancy_estimate}, we obtain the bound,
\begin{equation}
\pr{V(r_n(c,\beta)) > 0} \leq D (1+\log n + 4(\log n)^2) n^{-\frac{1}{c}},
\label{eqn:bound_vacancy1}
\end{equation} 
for some constant $D$. Hence,
\begin{eqnarray}
n \, \EXP{e^{-cn\pi r_n^2(c,\beta)(1-V(r_n(c,\beta)))}\1\{V(r_n(c,\beta))=0\}}  
& = & n \exp(-cn\pi r_n(c,\beta)^2)\pr{V(r_n(c,\beta))=0}) \nonumber \\
& = & \beta \, \pr{V(r_n(c,\beta))=0} \to \beta,
\label{eqn:E_W_n_first}
\end{eqnarray}
as $n \to \infty$. The second term in (\ref{eqn:E_W_n}) is bounded by
\begin{equation}
n \, \pr{ N_n = 0} = n \exp(-n\pi r_n(1-\ep,\beta)^2) = n^{1-\frac{1}{1-\ep}}\beta^{\frac{1}{1-\ep}} \to 0,
\label{eqn:E_W_n_second}
\end{equation}
as $n \to \infty$. We will now show that the third term in (\ref{eqn:E_W_n}) converges to $0$. On the event $\{ N_n > 0 \}$, we have 
\begin{equation}
1 - V(r_n(c,\beta)) > \eta_*(c,\ep).
\label{eqn:bound_vacancy2}
\end{equation} 
Using (\ref{eqn:bound_vacancy2}) first and then (\ref{eqn:bound_vacancy1}), the third term in (\ref{eqn:E_W_n}) 
can be bounded by
\begin{eqnarray}
n e^{-cn \pi r_n(c,\beta)^2 \eta_*(c,\ep)}\pr{V(r_n(c,\beta)) > 0, N_n > 0}
& \leq &  n^{1-\eta_*(c,\ep)} \beta^{\eta_*(c,\ep)} \pr{V(r_n(c,\beta)) > 0} \nonumber \\
& \leq & D \; n^{1-\eta_*(c,\ep)-\frac{1}{c}} (1+\log n + 4(\log n)^2)\beta^{\eta_*(c,\ep)}
\nonumber \\
& \to & 0
\label{eqn:E_W_n_third}
\end{eqnarray}
as $n \to \infty$ by (\ref{eqn:cond_A_c}). 

It follows from (\ref{eqn:E_W_n}), (\ref{eqn:E_W_n_second}) and (\ref{eqn:E_W_n_third}) that
$\EXP{W_n(r_n(c,\beta))} \to \beta,$ as $n \to \infty.$ \qed

The \emph{total variation distance} between two integer valued random variables $\psi,\zeta$ is defined as
\begin{equation}
\label{eqn:tv}
d_{TV}(\psi,\zeta) = \sup_{A \subset \mZ} |\pr{\psi \in A} - \pr{\zeta \in A}|.
\end{equation}
The following estimate in the spirit of Theorem 6.7(\cite{Penrose03}) will be our main tool in proving Poisson convergence of $W_n(r_n(c,\beta))$. We denote the Palm version  $\Pdl \cup \{x\}$ of $\Pdl$ by $\Pdlx$.
\newpage
\begin{lem}
\label{lem:po_approx}
Let $0< r < 1$ and let $\cC(.\,,\,.)$ be the coverage process defined by (\ref{eqn:cov}). Define the integrals $I_{in}(r),$ $i=1,2$, and $n \geq 1$ by
\begin{eqnarray}
I_{1n}(r) & := & n^2 \int_{U}dx\int_{B_x(5r) \cap U} dy\ \pr{\Pdl(\cC(\Pdc \cap B_x(r),r)) = 0}\pr{\Pdl(\cC(\Pdc \cap B_y(r),r)) = 0}, \no \\
\label{eqn:int_po_approx} I_{2n}(r) & := & n^2 \int_{U} dx\int_{B_x(5r) \cap U}dy\ \pr{\Pdlx(\cC(\Pdc \cap B_y(r),r)) = 0 = \Pdly(\cC(\Pdc \cap B_x(r),r))}. 
\no \\
& & 
\end{eqnarray}
Then,
\begin{equation}
d_{TV}\left(W_n(r),Po(\EXP{W_n(r)})\right) \leq \min\left(3,\frac{1}{\EXP{W_n(r)}}\right)(I_{1n}(r) + I_{2n}(r)).
\label{eqn:poi_approx}
\end{equation} 

\end{lem}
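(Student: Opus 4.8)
The plan is to apply the Chen--Stein method for Poisson approximation of a sum of dependent indicators, in the form given by Theorem 6.7 of \cite{Penrose03}, to the representation of $W_n(r)$ as a sum over the points of $\Pdl$. By (\ref{eqn:rep_isol_nodes}) we may write $W_n(r) = \sum_{X_i \in \Pdl} \xi(X_i)$, where $\xi(x) := \1\{\Pdc(B_x(r) \cap \cC(\Pdl \setminus \{x\},r)) = 0\}$ is the indicator that $x$ is an isolated node. The first task is to exhibit a bounded region of dependence for each $\xi(x)$. Using the reformulation (\ref{eqn:2rep_isol_nodes}), the event $\{\xi(x)=1\}$ is determined by $\Pdc \cap B_x(r)$ together with $\Pdl \cap B_x(2r)$: a point $z \in B_x(r)$ lies in $\cC(\Pdl \setminus \{x\},r)$ only if some point of $\Pdl$ lies within distance $r$ of $z$, hence within $B_x(2r)$. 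Consequently $\{\xi(x)=1\}$ is measurable with respect to the restrictions of the two processes to $B_x(2r)$.

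First I would use this localization to verify the independence hypothesis of the Chen--Stein bound with dependency neighbourhood $\{y : d(x,y) \le 5r\}$. Whenever $d(x,y) > 4r$ the balls $B_x(2r)$ and $B_y(2r)$ are disjoint, so by the independence of a Poisson process over disjoint sets the events $\{\xi(x)=1\}$ and $\{\xi(y)=1\}$ are independent; more generally $\xi(x)$ is independent of the entire family $\{\xi(y) : d(x,y) > 5r\}$. Taking $B_x(5r)$ as the neighbourhood (a conservative choice, since $5r > 4r$) therefore makes the third Chen--Stein term, the one measuring the dependence of $\xi(x)$ on indicators outside its neighbourhood, vanish, leaving only the two terms $b_1$ and $b_2$.

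The remaining and most delicate step is to translate the abstract sums $b_1$ and $b_2$ into the integrals $I_{1n}(r)$ and $I_{2n}(r)$ of (\ref{eqn:int_po_approx}) via Palm theory for Poisson processes. Applying the one-point Mecke/Palm formula gives $b_1 = n^2 \int_U dx \int_{B_x(5r) \cap U} dy\; \EXP{\xi(x)}\,\EXP{\xi(y)}$, and since $\EXP{\xi(x)} = \pr{\Pdl(\cC(\Pdc \cap B_x(r),r)) = 0}$ this is precisely $I_{1n}(r)$. For $b_2 = \sum_x \sum_{y \neq x} \EXP{\xi(x)\xi(y)}$ I would invoke the two-point Palm formula: conditioning $\Pdl$ to contain both $x$ and $y$ replaces the ambient process by $\Pdl \cup \{x,y\}$, so that applying (\ref{eqn:2rep_isol_nodes}) to each of $x$ and $y$ in the augmented configuration turns the joint isolation event into $\{\Pdlx(\cC(\Pdc \cap B_y(r),r)) = 0\} \cap \{\Pdly(\cC(\Pdc \cap B_x(r),r)) = 0\}$; integrating over pairs with $d(x,y) \le 5r$ yields exactly $I_{2n}(r)$. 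With $b_1$ and $b_2$ so identified and the third term zero, the stated inequality, including its prefactor $\min(3, 1/\EXP{W_n(r)})$, is the conclusion of the cited theorem. The hard part will be this last translation, in particular keeping the bookkeeping of the two-point Palm distribution consistent so that the Palm insertions $\Pdlx$ and $\Pdly$ land in the correct coverage events.
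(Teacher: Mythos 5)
Your high-level strategy is the right one, and your Palm-calculus bookkeeping is correct: the localization of the isolation event for $x$ to the restrictions of $\Pdc$ to $B_x(r)$ and of $\Pdl$ to $B_x(2r)$, the resulting independence at separation $4r$ (so that $5r$ is indeed a safe dependency radius), and the identification of the one-point and two-point Palm expectations with the integrands of $I_{1n}(r)$ and $I_{2n}(r)$ (in particular that the two-point Palm insertion turns the joint isolation event into $\{\Pdlx(\cC(\Pdc \cap B_y(r),r)) = 0\} \cap \{\Pdly(\cC(\Pdc \cap B_x(r),r)) = 0\}$) all match what the paper obtains.

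The gap is in the step you treat as automatic: invoking ``the conclusion of the cited theorem.'' Theorem 6.7 of Penrose is stated for a functional of a \emph{single} point process and does not cover the present two-process indicator; the paper only claims its lemma is ``in the spirit of'' that theorem and that the \emph{proof} follows the same lines. The tool that actually produces the prefactor $\min(3,1/\EXP{W_n(r)})$ and the terms $b_1,b_2$ is the dependency-graph form of the Chen--Stein bound (Theorem 2.1 in Penrose), and that theorem applies to a finite family of indicators over a \emph{deterministic} index set. You cannot apply it directly to $\sum_{X_i \in \Pdl}\xi(X_i)$, whose index set is the random support of a Poisson process. The paper bridges this by discretizing: it partitions $U$ into cubes $H_{m,i}$ of side $m^{-1}$, defines $\xi_{m,i}$ as the indicator that $H_{m,i}$ contains exactly one $\Pdl$-point which is isolated from the rest, applies the dependency-graph bound to the deterministic family $\{\xi_{m,i}\}_{i}$ with neighbourhoods given by cube centres within distance $5r$, and then lets $m \to \infty$, checking $W_m \to W_n(r)$ a.s., $\EXP{W_m} \to \EXP{W_n(r)}$, $b_1(m) \to I_{1n}(r)$ and $b_2(m) \to I_{2n}(r)$ by dominated convergence. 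Either this discretization, or an appeal to a Chen--Stein bound formulated directly for Poisson functionals via Palm distributions (which is not the result you cite), is needed to make your argument rigorous; as written, the application of the finite-index theorem to a Poisson-indexed sum is the missing idea. You also slightly misplace the difficulty: the ``translation'' of $b_1,b_2$ into integrals is routine once the discretization is in place, whereas justifying the use of the theorem at all is where the work lies.
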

{\bf Proof of Lemma \ref{lem:po_approx}.} 
The proof follows along the same lines as the proof of Theorem 6.7 (\cite{Penrose03}). For every $m \in \mN$, partition $U$ into disjoint cubes of side-length $m^{-1}$ and corners at $m^{-1}\mZ^d$. Let the cubes and their centres be denoted by $H_{m,1},H_{m,2},...$ and $a_{m,1},a_{m,2}...$ respectively. Define $I_m := \{i \in \mN \, : H_{m,i} \subset [0,1]^d\}$ and $E_m := \{<i,j> \, : i,j \in I_m, \; 0 < \|a_{m,i} - a_{m,j}\| < 5r \}$. The graph $G_m = (I_m,E_m)$ forms a dependency graph (see \cite[Chapter 2]{Penrose03}) for the random variables $\{\xi_{m,i}\}_{i \in I_m}$. The dependency neighbourhood of a vertex $i$ is $N_{m,i} = i \cup \{j : <i,j> \in E_m \}.$  Let
\[ \xi_{m,i} := \1\{\{\Pdl(H_{m,i})=1\} \cap \{\Pdl(\cC(\Pdc \cap B_{a_{m,i}}(r),r) \cap H_{m,i}^c) = 0\}\} .\]
$\xi_{m,i} = 1$ provided there is exactly one point of $\Pdl$ in the cube $H_{m,i}$ which is not connected to any other point of $\Pdl$ that falls 
outside $H_{m,i}$ in the graph $G_n(cn,r)$. Let $W_m = \sum_{i \in I_m}\xi_{m,i}$. Then almost surely, 
\begin{equation}
\label{eqn:lim_W^m}
W_n(r) = \lim_{m \to \infty} W_m.
\end{equation}
%
Let $p_{m,i} = \EXP{\xi_{m,i}}$ and $p_{m,i,j} = \EXP{\xi_{m,i}\xi_{m,j}}$. The remaining part of the proof is based on the notion of dependency graphs and the Stein-Chen method. By  \cite[Theorem 2.1]{Penrose03}, we have
\begin{equation}
d_{TV}(W_m,Po(\EXP{W_m})) \leq min(3,\frac{1}{\EXP{W_m}})(b_1(m)+b_2(m)),
\label{eqn:poi_approx_m}
\end{equation}
where $b_1(m) = \sum_{i \in I_m} \sum_{j \in N_{m,i}} p_{m,i}p_{m,j}$ and $b_2(m) = \sum_{i \in I_m} \sum_{j \in N_{m,i}/\{i\}} p_{m,i,j}$. The result 
follows if we show that the expressions on the left and right
in (\ref{eqn:poi_approx_m}) converge to the left and right hand expressions 
respectively in (\ref{eqn:poi_approx}).

Let $w_m(x)= m^dp_{m,i}$ for $x \in H_{m,i}$. Then $\sum_{i \in I_m}p_{m,i} = \int_U w_m(x) \ dx$. Clearly,
\[ \lim_{m \to \infty} w_m(x) = n \pr{\Pdlx(\cC((\Pdc \cap B_x(r))/\{x\},r)) = 0} = n \pr{\Pdl(\cC(\Pdc \cap B_x(r),r)) = 0}.\] 
Since $w_m(x) \leq  m^d\pr{\Pdl(H_{m,i})=1} \leq n$,
\[ \lim_{m \rar \infty} \EXP{W_m} = n \int_U \pr{\Pdl(\cC(\Pdc \cap B_x(r),r)) = 0} \ dx = \EXP{W_n(r)}, \]
where the first equality is due to the dominated convergence theorem and the second follows from (\ref{eqn:2rep_isol_nodes}) - (\ref{eqn:exp_isoln_palm}). Similarly by letting $u_m(x,y) = m^{2d} p_{m,i}p_{m,j} \1\{[j \in N_{m,i}]\}$ and $v_m(x,y) = m^{2d}p_{m,i,j} \1\{[j \in N_{m,i}/\{i\}]\}$ for $x \in H_{m,i}, \ y \in H_{m,j}$, one can show that 
\begin{eqnarray*}
b_1(m) & = & \int_U u_m(x,y) \ dx \ dy \ \to \ I_{1n}(r), \\ 
b_2(m) & = & \int_U v_m(x,y) \ dx \ dy \ \to \ I_{2n}(r). \qed
\end{eqnarray*}  
{\bf Proof of Theorem \ref{thm:lnnd}. } (\ref{eqn:lnnd}) follows easily from (\ref{eqn:po_convg}) by noting that
\[ \pr{M_n \leq r} = \pr{W_n(r) = 0} . \] 
Hence, the proof is complete if we show (\ref{eqn:po_convg}) for which we
will use Lemma \ref{lem:po_approx}. Let $I_{in}(r_n(c,\beta)),$ $i=1,2$, be the integrals defined in (\ref{eqn:int_po_approx}) with $r$ taken to be $r_n(c,\beta)$ satisfying (\ref{ass:lt_regime}).
From Lemma \ref{lem:cgs_exp_isoln}, $\EXP{W_n(r_n(c,\beta))} \rar \beta$ as $n \to \infty.$ 
As convergence in \emph{total variation distance} implies convergence in distribution, by
Lemma \ref{lem:po_approx} and the conclusion in the last statement, it suffices to show that 
$I_{in}(r_n(c,\beta)) \to 0,$ as $n \to \infty$ for $i=1,2$. 

Using (\ref{eqn:exp_isoln_palm}) and Lemma \ref{lem:cgs_exp_isoln}, we get for some finite positive constant $C$ 
that
\[ I_{1n}(r_n(c,\beta))  =  \int_{U}dx\int_{B_x(5r_n(c,\beta)) \cap U} dy \ (\EXP{W_n(r_n(c,\beta))})^2 \leq C (5r_n(c,\beta))^d \rar 0, \qquad \mbox{ as } n \to \infty. \]
We now compute the integrand in the inner integral in $I_{2n}(r)$. Let $\Gamma(x,r) = \|B_O(r) \cap B_x(r)\|.$ For $x,y \in U$, using (\ref{eqn:exp_cov_rand_set}) we get
\begin{eqnarray}
\lefteqn{\pr{\{\Pdlx(\cC(\Pdc \cap B_y(r),r)) = 0\} \cap \{ \Pdly(\cC(\Pdc \cap B_x(r),r)) = 0\}}} \nonumber \\
& = & \pr{\Pdc(B_y(r) \cap (\cC(n,r) \cup B_x(r))) = 0,\Pdc(B_x(r) \cap (\cC(n,r) \cup B_y(r))) = 0} \nonumber \\
& \leq & \pr{\Pdc(B_y(r) \cap \cC(n,r)) = 0, \Pdc(B_x(r) \cap \cC(n,r)) = 0} \nonumber \\
& = &  \pr{\Pdc((B_y(r) \setminus B_x(r)) \cap \cC(n,r)) = 0,\Pdc(B_x(r) \cap \cC(n,r)) = 0} \nonumber \\
& = &  \EXP{\exp( -cn \|(B_y(r) \setminus B_x(r)) \cap \cC(n,r)\|) \exp(-cn \|B_x(r) \cap \cC(n,r)\|) }.\label{eqn:integrand_I2}
\end{eqnarray}
%
We can and do choose an $\eta >0$ so that for any $r>0$ and $|y-x| \leq 5r$
(see \cite[Eqn 8.21]{Penrose03}), we have 
\[ \|B_x(r) \setminus B_y(r)\| \  \geq \ \eta \ r^{d-1} \ |y-x|. \] 
Hence if $|y-x| \leq 5r$, the left hand
expression in (\ref{eqn:integrand_I2}) will be bounded above by
\[ \EXP{\exp\left(-cn\eta r^{d-1}|y-x| \frac{\|(B_y(r) \setminus B_x(r)) \cap \cC(n,r)\|}{\|B_y(r) \setminus B_x(r)\|}\right) \exp\left(-cn \|B_x(r) \cap \cC(n,r)\|
\right)}. \]
Using the above bound, we get
\begin{eqnarray*}
I_{2n}(r_n(c,\beta)) & \leq & \int_{B_O(5r_n^d(c,\beta)) \cap U} n^2 \mathsf{E} \Big(\exp\left(-cn \|B_O(r_n(c,\beta)) \cap \cC(n,r_n(c,\beta) )\| \right) \\
& & \exp\left( -cn\eta r_n(c,\beta)^{d-1}| y |\frac{\|(B_{y}(r_n(c,\beta)) \setminus B_O(r_n(c,\beta))) \cap \cC(n,r_n(c,\beta))\|}{\|B_{y}(r_n(c,\beta)) \setminus B_O(r_n(c,\beta))\|} \right) \Bigg) \ dy. 
\end{eqnarray*}
Making the change of variable $w = nr_n(c,\beta)^{d-1}y$ and using
(\ref{eqn:int_W_n}), we get
\begin{eqnarray*}
I_{2n}(r_n(c,\beta)) & \leq & \int_{B_x(5nr_n(c,\beta)^d) \cap U} 
 (nr_n(c,\beta)^d)^{1-d} \mathsf{E} \Bigg( n \exp(-cn \|B_O(r_n(c,\beta)) \cap \cC(n,r_n(c,\beta))\|)  \\
& &  \exp\left( -c\eta |w|\frac{\|(B_{w(nr_n(c,\beta)^{d-1})^{-1}}(r_n(c,\beta)) \setminus B_O(r_n(c,\beta))) \cap \cC(n,r_n(c,\beta))\|}{\|B_{w(nr_n(c,\beta)^{d-1})^{-1}}(r_n(c,\beta)) \setminus B_O(r_n(c,\beta))\|} \right) \Bigg) \ dw \\
& \leq & (nr_n(c,\beta)^d)^{1-d} \EXP{W_n(r_n(c,\beta))}  \to 0,
\end{eqnarray*}
as $n \to \infty$, since by Lemma \ref{lem:cgs_exp_isoln}, $\EXP{W_n(r_n(c,\beta))} \rar \beta$ and $nr_n(c,\beta)^d \to \infty$ as $n \to \infty.$ 
\remove{We have shown that for $i=1,2$, $I_{in}(r_n(c,\beta)) \rar 0,$ and hence by Lemma \ref{lem:po_approx},
\[ d_{TV}(W_n(r_n(c,\beta)),Po(\EXP{W_n(r_n(c,\beta))})) \rar 0, \] 
as $n \rar \infty$. Again, since $\EXP{W_n(r_n(c,\beta))} \rar \beta$, we have $Po(\EXP{W_n(r_n(c,\beta))}) \stackrel{d}{\rar} Po(\beta)$. Consequently,
$d_{TV}(W_n(r_n(c,\beta)),Po(\beta)) \to 0$  as $n \rar \infty$. 
As convergence in \emph{total variation distance} implies convergence in distribution, we get (\ref{eqn:po_convg}).} \qed

\remove{
By assumption since $a > a(c),$ by continuity, we can choose a $A_0 > 1$ such that $a \eta(a,A_0c) >1.$
Define the event $A_i := \{\mbox{$X_i$ connects to all points of $\Pn \cap B_{X_i}(R_n)$ in $G(n,cn,ar_n)$}\}.$  Let $N_n = \Pn([0,1]^2)$. We need to show that $\pr{\cap_{i=1}^{N_n} A_i} \to 1$ as $n \to \infty$. Let $O$ denote the origin and recall that we are using the toroidal metric.
\bea
\pr{\cup_{i=1}^{N_n} A_i^c} & \leq &  \sum_{i=1}^{n+n^{\frac{3}{4}}} \pr{A_i^c}  + \pr{|N_n-n| > n^{\frac{3}{4}}} \no \\
 & \leq & (n+n^{\frac{3}{4}})p_n + \pr{|N_n-n| > n^{\frac{3}{4}}},
\label{eqn:prob_Ai}
\eea
where $p_n = \pr{\mbox{$O$ does not connect to at least one point of $\Pn \cap B_0(R_n)$ in $G(n,cn,ar_n)$} }$. From \cite[Lemma 1.4]{Penrose03}, we know that the second term on the right in (\ref{eqn:prob_Ai}) is summable. Conditioning on the number of points of $\P_n$ in $B(O,R_n)$ and then using the Boole's inequality, we get
\beas
p_n & \leq &  \sum_{k=0}^{\infty} \frac{(n\theta_d R_n^d)^ke^{-n\theta_d R_n^d}}{k!} \frac{k}{\theta_d R_n^d} \int_{B_O(R_n)} e^{-cn\|B_0(r_n)\cap B_x(r_n)\|} dx \\
& \leq & \sum_{k=0}^{\infty} \frac{(n\theta_d R_n^d)^ke^{-n\theta_d R_n^d}}{k!} \frac{k}{\theta_d R_n^d} \int_{B_O(R_n)} e^{-cn\|B_0(r_n)\cap B_{x_n}(r_n)\|} dx \\
& \leq &  \sum_{k=0}^{\infty} \frac{(n\theta_d R_n^d)^ke^{-n\theta_d R_n^d}}{k!} \frac{k}{\theta_d R_n^d} \int_{B_O(R_n)} e^{-cn\zeta(a,A_0c) \theta_d r_n^d} dx, \\
& = & n\theta_d R_n^d e^{-cn\zeta(a,A_0c) \theta_d r_n^d},
\eeas
where $\theta_d r_n^d \zeta(a,A_0c) = \inf \{ \|B_O(r_n) \cap B_{x_n}(r_n)\| : x_n \in B_O(R_n) \}.$ Since the infimum is attained for a point on the boundary of $B_O(R_n)$,
\begin{equation}
\zeta(a,A_0c) \geq \eta(a,A_0c), \qquad d \geq 2,
\label{eqn:zeta}
\end{equation}
where $\eta(a,c)$ is as defined in (\ref{eqn:defn_eta}). A proof of this is given in the appendix. Simplifying by substituting for $R_n$ and $r_n$, we get
\beas
(n+n^{3/4})p_n & \leq & (n+n^{3/4}) n\theta_d R_n^d e^{-cn\eta(a,A_0c) \theta_d r_n^d} \\
& = & (n+n^{3/4}) \frac{A_0\log n}{n^{a\eta(a,A_0c)}} \to 0,
\eeas
as $n \to \infty$ since $a\eta(a,A_0c) > 1$. \qed }

\label{sec:appendix}

\remove{
\subsection{Derivation of Equation \ref{eqn:prob_isoln}. } We shall prove that given a bounded random closed set $F$, we have that $\pr{\Xi \cap F = \emptyset} = \exp\{-\mu \EXP{\|F\|}\}$. The derivation now follows easily from the fact that $\EXP{\|B_O(r) \cap C(\lam,r)\|} = \theta_d r^d(1-\exp\{-\lam \theta_d r^d\})$ (see \cite[Chapter 3]{Hall88}). Let $F \subset B_0(t)$ a.s. for some $t > 0$. Now given that exactly $N$ points of $\Xi$ lie in $B_0(t)$, the probability that none of the points lie in $F$ is given by $(1 - \frac{\EXP{\|F\|}}{\theta_d t^d})^N$.  Since $N$ is Poisson random variable with mean $\mu \theta_d t^d$, we obtain that
\[ 
\pr{\Xi \cap F = \emptyset} = \EXP{(1 - \frac{\EXP{\|F\|}}{\theta_d t^d})^N} = \exp\{-\mu \EXP{\|F\|}\}. \] \qed
}

\remove{
\subsection{Proof of (\ref{eqn:defn_eta}).}

\begin{figure}[tb]
  \centerline{\includegraphics[scale=0.5]{fig2.eps}}
  \caption{$|x| = R, \phi = \phi(\frac{r}{2R})$ and $L(r,R)$ is the area of the lens of intersection, the shaded region.}
  \label{fig:2}
\end{figure}

We sketch the derivation here first for $d=2$ and then for $d \geq 3$. We sketch them seperately as we obtain tighter bounds for $d=2$ than in higher dimensions. Refer to Figure \ref{fig:2} for this derivation when $d=2$. Let $L(r,R) = \|B_O(r) \cap B_x(r)\|, |x| = R$ be the area of lens of intersection of balls of radius $r$ centered $R ( < 2r)$ apart. It is clear that $L(r,R)$ is cut into two equal halves by the line $PQ$ and the area of each of those halves is the area enlosed between the chord in the ball $B_0(r)$ and its boundary. The area of the angular sector $OPQ$ (with $PQ$ considered as the arc along the circumference of the circle) is easily seen to be $\phi(r,R)r^2$. The area of the triangle $OPQ$ is $r \sin(\phi(r,R)) \times r \cos(\Phi^{(1)}(r,R)) = \frac{r^2}{2}\sin(2\Phi^{(1)}(r,R)).$ And, now it is clear that $L(r,R) = (2\Phi^{(1)}(r,R) - \sin(2\Phi^{(1)}(r,R)))r^2$ and that $\cos(\Phi^{(1)}(r,R) = \frac{R}{2r}$. Now for $d \geq 3$. The width of the lens of intersection is $2r-R$. Thus the lens of intersection contains a ball of diameter $2r-R$. The volume of such a ball is $\theta_d(r-\frac{R}{2})^d$ . Thus $L(r,R) > \theta_d(r-\frac{R}{2})^d$. Since $\zeta(a,A_0c) = L(r_n,R_n)$, the corresponding expressions for $\eta(a,A_0c)$ follows.  \qed
}

\end{document}